\documentclass[a4paper,12pt]{amsart}

\usepackage{amsfonts}
 \usepackage{amssymb}
 \usepackage{amsmath,amsxtra,amsthm} 

\usepackage[usenames]{color}

 \usepackage{enumitem}

\usepackage[mathscr]{eucal}

 \usepackage{dsfont}

\usepackage{wrapfig}

\usepackage[all]{xy}

\usepackage{shuffle}
\usepackage{scalerel}[2016/12/29]

%%%%%%%%%%%%%%%%%%%%%%%%%%%%%%%%%

\newtheorem{theorem}{Theorem}
\newtheorem{deff}{Definition}

\newtheorem{proposition}{Proposition}
\newtheorem{example}{Example}

\newtheorem{rem}{Remark}
%\newtheorem{result}{Main Result}

% \numberwithin{equation}{section}

% New macros
% --->

\setlength{\textwidth}{160.0mm}
     \setlength{\textheight}{225.0mm}
     \setlength{\oddsidemargin}{0in}
    \setlength{\evensidemargin}{0in}
    \setlength{\topmargin}{-1cm}
    \setlength{\parindent}{5.0mm}
% Remove in AMS class documents !! -----
% -->

% <--
%---------------------------------------

\newcommand{\mto}{\mapsto}
\newcommand{\bqa}{\begin{eqnarray}}
\newcommand\eqa {\end{eqnarray}}
\newcommand{\beq}{\begin{eqnarray}}
\newcommand{\beqn}{\begin{eqnarray}\nonumber}
\newcommand{\eeq}{\end{eqnarray}}
\newcommand{\be}{\begin{array}}
\newcommand{\ee}{\end{array}}

 \newcommand{\pt}{\partial}

 \newcommand{\od}{{\scriptstyle \odot}}

   \newcommand\vf\varphi

 \newcommand{\Hom}{\mathrm{Hom}}
 \newcommand{\End}{\mathrm{End}}

 \newcommand{\uHom}{\underline{\mathrm{Hom}}}

 %\newcommand{\ad}{\mathrm{ad}}

 %\newcommand{\p}{\mathrm{p}}

% <---

%  \newcommand{\cE}{{\cal E}}
%  \newcommand{\cS}{{\cal S}}
%  \newcommand{\cW}{{\cal W}}
%  \newcommand{\cK}{{\cal K}}
 \newcommand{\cV}{{\mathcal V}}
 \newcommand{\cH}{{\mathcal H}}

 \newcommand{\cO}{{\mathcal{O}}}
 \newcommand{\cB}{{\mathcal B}}

 \newcommand{\cF}{{\mathcal{F}}}

\newcommand{\cA}{{\mathcal A}}

\newcommand{\cR}{{\mathcal R}}

\newcommand{\oM}{{\overline{M}}}

 \newcommand{\cp}{\mathpzc{p}}

 \newcommand{\R}{{\mathbb R}}
 \newcommand{\Z}{{\mathbb Z}}

 \newcommand{\N}{{\mathbb N}}

%%%%%%%%%%%%%%%%%%%%%%%%%%%%%

%%%%% underline %%%%%%%%%

\newcommand{\uU}{{\underline{U}}}

%%%%%%%%%%%%%%%%%%%

\newcommand{\compTens}{\mathbin{\widehat{\otimes}}}%completed tensor product
\newcommand{\id}{\mathrm{id}}%identity map
%\mathchardef\mhyphen="1D % Define a "math hyphen"

  \newcommand{\noi}{{\vskip 2mm\noindent}}

%\pagestyle{myheadings}\markright{Notes on superfields}

%%%%%%%%%%%%  {\mbox{}\vskip 1mm}
%%%%%%%%%%%%  \usepackage[usenames]{color} \usepackage[dvips]{graphicx}
%%%%%%%%%%%%  \textcolor{Sepia}{\section{}} \textcolor{PineGreen} \textcolor{BrickRed}

  \def\sll{\mathfrak sl}
  \def\g{{\mathfrak g}}
  \def\gl{{\mathfrak gl}}

  \def\sll{{\mathfrak sl}}
  \def\h{{\mathfrak h}}
  
  \def\m{{\mathfrak m}}
  \def\vu{\vec{u}}
  \def\vv{\vec{v}}

  \def\H{\mathbb H}

   \def\a{\alpha}
   \def\b{\beta}
   
   \def\la{\lambda}
   \def\e{\epsilon}

  \def\sst{\scriptscriptstyle}

 \def\bk{\mathds{k}}

\DeclareFontFamily{OT1}{pzc}{}
\DeclareFontShape{OT1}{pzc}{m}{it}{<-> s * [1.15] pzcmi7t}{}
\DeclareMathAlphabet{\mathpzc}{OT1}{pzc}{m}{it}

\newcommand{\Sym}{\mathrm{Sym}}
\newcommand{\sE}{\mathscr{E}}

\newcommand{\sG}{\mathscr{G}}

\newcommand{\sD}{\mathscr{D}}

\newcommand{\UE}{{\mathscr{U}}}
\newcommand{\sT}{{\mathscr{T}}}

\newcommand{\sJ}{{\mathscr{J}}}

%%%%%%%%%%%%%   Document  %%%%%%%%%

\newcommand{\isomto}{\stackrel{\sim}{\rightarrow}}

\setlength{\parindent}{0pt}
%%%%%%%%%%%%%   Document  %%%%%%%%%

\begin{document}

\bibliographystyle{amsplain}

%%%%%%%%%%%%%  Title page %%%%%%%%%%

%\title[The category of  $\Z-$graded manifolds]{The category of $\Z-$graded manifolds: \\ groups and algebras}
\title[Harish-Chandra pairs]{Various instances of Harish-Chandra pairs}
\author{Alexei Kotov}
\address{Alexei Kotov: Faculty of Science, University of Hradec Králové, Rokitanskeho 62, Hradec Králové
50003, Czech Republic}
\email{oleksii.kotov@uhk.cz}

\author{Vladimir Salnikov}
\address{Vladimir Salnikov: LaSIE  -- CNRS \&  La Rochelle University,
Av. Michel Cr\'epeau, 17042 La Rochelle Cedex 1, France}
\email{vladimir.salnikov@univ-lr.fr}

\keywords{Graded manifolds, algebroids, Hopf algebras, Harish-Chandra pairs} 
\subjclass[2010]{
}
\begin{abstract} 
In this paper we address several algebraic constructions in the context of groupoids, algebroids and  $\Z$-graded manifolds. 
%decoupling the parity and the grading 
We generalize the results of integration of $\N$-graded Lie algebras to the honest $\Z$-graded case and provide some examples of application of the technique based on Harish--Chandra pairs. We extend the construction to the algebroids setting, the main example being the action Lie algebroid.  \\[-2em]

\end{abstract}

\maketitle

%%%%%%%%%%%%%  The main part %%%%%%%%

\renewcommand{\theequation}{\thesection.\arabic{equation}}

\section*{Introduction}\label{sec:introduction}

Graded / super manifolds, or in some communities colored manifolds, have been extensively studied for several decades. They provide on the one hand a universal description for a lot of classical (differential) geometric structures, and on the other a convenient language 
for applications to gauge theories. A (non-exhaustive) list of related works can be found in references in \cite{DGLG}, where we have first addressed the purely mathematical aspects of graded manifolds in the context of integration of differential graded Lie algebras to differential graded Lie groups. In the categorical setting, i.e. both from objects and morphisms perspective, we have proved the equivalence of categories of differential graded Lie groups and Lie algebras passing by the intermediate step of differential graded Harish--Chandra pairs.

While the result for $\N$-graded Lie groups and algebras was expected and essentially similar to the strategy in the super ($\Z_2$-graded) case, the technicalities were much more subtle. In particular the functional spaces were essentially infinite dimensional, coming not only from smooth functions on the base of a graded manifold but also from formal power series on the graded spaces. 
We have noticed however that most of the construction should be extendable to the honest $\Z$-graded case, i.e. having generators of both positive and negative degrees. A tricky point was to make use of the $\Z$-graded version of the Poincar\'e--Birkhoff--Witt theorem. Back then we did not have the ``tools'' for an elegant description of this situation and decided to address it in a separate paper. 
Those tools were indeed found in \cite{AKVS}, where we have explained how the functional space on the $\Z$-graded manifolds can be constructed by enlarging the space generated by polynomials on non-zero degree generators. And more importantly we introduced a way of intrinsically describing the properties of this functional space using (double) filtrations. 

The approach permitted to prove in that same paper a $\Z$-graded analog of the Batchelor's theorem. It turned out to be also fruitful to study the normal form of differential graded manifolds ($\Z$-graded $Q$-manifolds) in \cite{LKS}.  In this paper we will use similar techniques to explain the properties of group-like objects in the category of $\Z$-graded manifolds. We will in particular address the question of $\Z$-graded Poincar\'e--Birkhoff--Witt theorem for various situations, thus revisiting and generalizing the result of \cite{DGLG}.

The paper is organized as follows:  in section \ref{sec:pbw} we revisit the Poincar\'e--Birkoff--Witt theorem and related results, and in particular point-out the key differences of the $\Z$-graded case. The constructions are afterwards applied for the $\Z$-graded Harish--Chandra pairs (section \ref{sec:HCP}): we consider the even- and the super- subalgebra cases, and end up with the general $\Z$-graded super case. In the short sections \ref{subsec:graLie} and \ref{subsec:grgr} we give a natural definition of semi-formal and global graded Lie groups, and compare them with the one we gave in the categorical setting in \cite{DGLG}. The section \ref{sec:algebroids} is devoted to universal enveloping algebroids; we start with a recollection of facts about Hopf algebroids, mostly known in the literature, and use them for enveloping algebras, the main examples being action groupoids / algebroids and the tangent Lie algebroid. For self-consistency of the paper we have recalled the local structure of $\Z$-graded manifolds, and introduced some notations in the appendix \ref{app:Z-cat}.  We have also recapitulated some of the constructions from \cite{AKVS} there, namely the filtrations (convenient to study functional spaces on graded manifolds) and the $\Z$-graded analog of Batchelor's theorem (for local / global structure of them).

%%%%%%%%%%%%%%%% section PBW %%%%%%%%%%%%%

\section{PBW for $\Z$-graded algebras} \label{sec:pbw}

\begin{deff}\label{def:graded_Lie}
A \emph{$\Z-$graded Lie algebra} is a $\Z-$graded vector space over $\bk$ 
\beqn\g =\bigoplus_{i\in\Z}\g_i\eeq together with a super skew-symmetric bilinear operation of degree 0
\beqn
[-,-]\colon \g\otimes \g\to g\, \hspace{3mm}
[x,y]=(-1)^{p(x)p(y)+1}[y,x], \hspace{2mm} x,y\in\g,,
\eeq 
which satisfies the super Jacobi identity
\beqn
\big[x, [y,z]\big]=\big[[x,y],z\big]+(-1)^{p(x)p(y)}
\big[y, [x,z]\big]
\eeq
\end{deff}

\begin{example}\label{ex:graded_Lie}
\mbox{}
\begin{itemize}[leftmargin = 2em]
    \item $\g=\sll_2(\bk)$ with the standard basis $(e,f,h)$, such that
    $[h,e]=e$, $h,f=-f$, $[e,f]=2h$. Define $\g_0=\bk h$, $\g_1=\bk e$, $\g_{-1}=\bk f$. We obtain a $\Z-$graded Lie algebra with all elements of even parity.
    \item Given a Lie super algebra $\g$, define a $\Z-$graded Lie algebra $T[1]\g =\g [1]\oplus \g$, where, as usual, $\g [k]$ is concentrated in degree $-k$, so we have  $\left(T[1]\g\right)_0 =\g$ and
    $\left(T[1]\g\right)_{-1} =\g[1]$. The bracket of elements of $\g$ is being as it was, while $\g$ is acting on $\g[1]$ by the left adjoint representation. The bracket on $\g[1]$ obviously vanish.
    \item For any $\Z-$graded Lie algebra $\g$ of pure even parity define $\Pi T\g =\Pi\g \oplus \g$. First, we canonically extend the grading to the tangent bundle with the reversed parity of fibers; it becomes a $\Z-$graded vector superspace. Second, endow the obtained graded space with the structure of a Lie superalgebra using procedure similar to the one in the previous example. We get a $\Z-$graded Lie superalgebra.  
     \end{itemize}
\end{example}

\begin{rem}
Notice that in all three cases, which we described in Example \ref{ex:graded_Lie}, super parity and $\Z-$grading are compatible in a nontrivial way, i.e. the former is not the reduction modulo 2 of the latter.
\end{rem}

\begin{example}\label{ex:linear_group}
Let $V$ be a $\Z-$graded vector superspace. Consider the Lie superalgebra $\g=\gl (V)$. The grading endomorphism of $V$ provides $\g$ with the structure of a graded Lie algebra. The super parity on $\g$ is the reduction of $\Z-$grading modulo 2 if and only if the same property is holding for $V$.
\end{example}

\begin{deff}\label{def:univ_envelop}
The \emph{universal enveloping algebra} of $\g$ is the unital associative algebra $\UE(\g)$, defined as the quotient algebra $T(\g)/I$, where $T(\g)$ is the tensor algebra of the $\bk-$module $\g$ and $I$ is the two-sided ideal generated by all elements of the form 
$$x\otimes y-(-1)^{p(x)p(y)}y\otimes x-[x,y], \text{ for } x,y\in \g.$$
\end{deff}

\noi There is a canonical map $\iota\colon \g\to \UE(\g)$. Later on we will see that it is an inclusion, which will allow us to identify $\g$ with its image in $\UE(\g)$ under the above map.  

\noi In addition to the (generally non-commutative) multiplication, $\UE(\g)$ is also endowed with: 
\begin{itemize}[leftmargin = 2em]
    \item[--] an even compatible degree $0$ coassociative super cocommutative comultiplication $\Delta\colon \UE(\g)\otimes \UE(\g)\to \UE(\g)$, such that
\beqn
\Delta (ab)=\Delta (a)\Delta (b)
\eeq
for all $a,b\in \UE(\g)$ and a counit $\UE(\g)\to\bk$, which descend from the ones on $T(\g)$. In particular, an element $a$ is \emph{primitive}, i.e. it satisfies
$\Delta (a)=a\otimes 1+ 1\otimes a$ if and only if it belongs to $\iota(\g)$. 
    \item[--] an antipode $S\colon\UE(\g)\to\UE(\g)$, defined such that 
    \beq\label{eq:antipode_UEg}
    S\left(x_1\cdots x_m\right)=(-1)^m x_m\cdots x_1\,,\hspace{2mm} \forall x_1,\ldots,x_m\in\g\,.
   \eeq
\end{itemize}
This data (multiplication, comultiplication, unit, counit, and antipode)
make $\UE(\g)$ into a graded Hopf algebra.

\noi It is worth noticing that, given two linear representations $(R_1, \rho_1)$ and $(R_2, \rho_2)$
of $\UE(\g)$ as associative algebras, the tensor product $R_1\otimes_\bk R_2$ admits a canonical structure of a $\UE(\g)-$module by the formula 
\beqn
\UE(\g)\xrightarrow{\Delta} \UE(\g)\otimes \UE(\g)
\xrightarrow{\rho_1\otimes\rho_2} \End (R_1)\otimes \End(R_2) 
\eeq
The coassociativity of $\Delta$ implies that the $\UE(\g)-$module structure on the triple tensor product does not depend on the order of extension. Note that while the representation of the Lie algebra on the tensor product of two modules is obtained using the Leibniz rule, in the case of a universal enveloping algebra, comultiplication plays the same role. This connection becomes especially noticeable if we take into account the fact that the image of the Lie algebra in its universal enveloping algebra is described by primitive elements for which the identity  $\Delta (a)=a\otimes 1+ 1\otimes a$ holds.

\begin{example}[equivariance of comultiplication]
\label{ex:equivariance_of_comult} Let $\g$ be a graded Lie algebra, consider $\UE(\g)$ as a left $\UE(\g)-$module. Then the (super cocommutative) comultiplication \\
$\Delta\colon \UE(\g)\to \UE(\g)\otimes \UE(\g)$ is a morphism of $\UE(\g)-$modules. It follows from the compatibility between multiplication and comultiplication. Indeed, for any $a,b\in \UE(\g)$, one has \\ $\Delta (ab)=\Delta(a)\Delta (b)$.
\end{example}

\begin{example}[equivariance of multiplication]
\label{ex:equivariance_of_mult}
Let $G$ be a smooth Lie supergroup (i.e. for a regression $\deg(\cdot) \equiv p(\cdot) \in \Z_2$), the Lie superalgebra of which is $\g$. By identifying $\g$ with left-invariant super vector fields on $G$, we endow the algebra of smooth functions\footnote{$\cF(X)$ throughout the paper will denote the algebra of functions on $X$ of appropriate class, depending on the context, for example \emph{smooth} when $X$ is a smooth manifold, or \emph{power series} when $X$ is formal.} $\cF (G)$ in a natural way with the structure of a $\g$-module, and hence also a $\UE(\g)$-module. Then the (supercommutative) multiplication $\cF (G)\otimes \cF (G)\to \cF (G)$ is a $\UE(\g)-$equivariant map. It is easy to see that the last statement is an associative version of the Leibniz rule for differentiation of the product of two functions along left-invariant super vector fields representing $\g$.
\end{example}

\noi Assume that each $\g_k$ admits a well-ordered basis $\{x_{\a_j}\}_{\a_j\in I_k}$, parameterized by a totally ordered set $I_k$, consisting of elements of homogeneous parity and degree simultaneously. Then the whole space $\g$ can be supplied with a well-ordered basis, such that any element of $I_0$ is strictly less than any element of $I_k$ for $k\ne 0$. Define $z_{\a_j}=\iota x_{\a_j}$.

\noi Let $\mathsf{A}$ be either $\emptyset$ or a sequence $\{\a_1, \a_2, \ldots, \a_m\}$, where $\a_j\in I_k$ for some $k$; we say that the length of $\mathsf{A}$ is $0$ or $m>0$, respectively. Let us call $\mathsf{A}$ \emph{admissible} if $m=0$ or $\mathsf{A}$ is increasing, i.e. $a_1\le a_2\le\ldots\le \a_m$, and the multiplicity of each index $\a_j$ is at most one whenever the corresponding $x_{\a_j}$ is odd. Notice that for odd elements on always has $x^2=0$ and then it is sometimes convenient to replace $(\iota x)^2$ by $\frac{1}{2}\iota[x,x]$.

\noi Define an \emph{admissible S-monomial} to be $x_\emptyset =1$ or $x_{\sst\mathsf{A}}=x_{\a_1}   \od x_{\a_2}\od \ldots \od x_{\a_m}\in \Sym (\g)$ for an admissible $\mathsf{A}$ of positive length $m$. Likewise, an \emph{admissible U-monomial} is said to be $z_\emptyset =1$ or $z_{\sst\mathsf{A}}=z_{\a_1}    z_{\a_2} \ldots  z_{\a_m}\in \UE (\g)$ for an admissible $\mathsf{A}$ with $m>0$.

\begin{theorem}[Poincar\'{e}–Birkhoff–Witt]
{\rm (\cite{FHT}, p.286)}\label{thm:PBW}
\mbox{}
\begin{enumerate}[leftmargin = 2em]
    \item The admissible U-monomials are a basis of $\UE(\g)$.
    \item In particular, the linear map $\iota$ is an inclusion and extends to an isomorphism of graded vector spaces $\Sym (\g)\isomto \UE(\g)$.
\end{enumerate}
\end{theorem}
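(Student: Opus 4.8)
\noi\textbf{Proof proposal.} Part~(2) will fall out of part~(1): every length-one sequence is admissible, so the linear independence of the admissible U-monomials forces in particular $\iota$ to be injective on $\g$, and the assignment $x_{\sst\mathsf{A}}\mapsto z_{\sst\mathsf{A}}$ --- a degree-preserving (and parity-preserving) bijection between the basis of admissible S-monomials of $\Sym(\g)$ and the basis of admissible U-monomials of $\UE(\g)$ --- extends uniquely to an isomorphism of graded vector spaces restricting to $\iota$ on $\g$. So the content is part~(1), which I would establish in the two classical steps, \emph{spanning} and \emph{linear independence}.

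\noi\emph{Spanning.} As $\UE(\g)$ is generated as a unital algebra by the $z_\a$, it is spanned by the words $z_{\a_1}\cdots z_{\a_m}$. The relations of Definition~\ref{def:univ_envelop} give
\beqn
z_\a z_\b-(-1)^{p(x_\a)p(x_\b)}z_\b z_\a=\iota[x_\a,x_\b]\in\iota(\g),
\eeq
so $\iota[x_\a,x_\b]$ is a $\bk$-combination of single letters $z_\ga$ (of degree $\deg\a+\deg\b$), and moreover $z_\a^2=\tfrac{1}{2}\iota[x_\a,x_\a]$ when $x_\a$ is odd. Transposing an out-of-order adjacent pair, or collapsing a repeated odd letter, therefore turns a word into an ordered word plus words of strictly smaller length, and an induction on the pair (length, number of inversions) ordered lexicographically --- the usual bubble-sort --- rewrites every word as a $\bk$-combination of admissible U-monomials.

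\noi\emph{Linear independence.} Here I would construct the PBW representation on $S:=\Sym(\g)$, whose $\bk$-basis is the admissible S-monomials $x_{\sst\mathsf{A}}$. One defines operators $\rho(x_\a)\in\End_\bk(S)$ by the standard recursion on the length of $\mathsf{A}$ and on the position of $\a$ among the indices of $\mathsf{A}$: when $\a$ is $\le$ all indices of $\mathsf{A}$ and distinct from them if $x_\a$ is odd, so that $x_\a\od x_{\sst\mathsf{A}}$ is again admissible, one puts $\rho(x_\a)x_{\sst\mathsf{A}}=x_\a\od x_{\sst\mathsf{A}}$; otherwise one moves $x_\a$ inward through the bracket, the new terms being either strictly shorter (a repeated odd letter producing exactly the correction $\tfrac{1}{2}[x_\a,x_\a]$) or of equal length but with $\a$ in a strictly later position. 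One then verifies, on each $x_{\sst\mathsf{A}}$, the identity
\beqn
\rho(x_\a)\rho(x_\b)-(-1)^{p(x_\a)p(x_\b)}\rho(x_\b)\rho(x_\a)=\rho\big([x_\a,x_\b]\big);
\eeq
granting it, $\rho$ is a degree-$0$ morphism of graded Lie algebras $\g\to\gl(S)$, hence by the universal property of $\UE(\g)$ it factors through an associative algebra map $\UE(\g)\to\End_\bk(S)$. By construction an admissible U-monomial evaluated at $1\in S$ returns the corresponding S-monomial, $z_{\sst\mathsf{A}}\cdot 1=x_{\sst\mathsf{A}}$ (induction on length: applied from the right, an admissible word stays ordered, so no correction is ever triggered). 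Hence a relation $\sum_{\mathsf{A}}c_{\sst\mathsf{A}}z_{\sst\mathsf{A}}=0$ in $\UE(\g)$ gives, on evaluation at $1$, the relation $\sum_{\mathsf{A}}c_{\sst\mathsf{A}}x_{\sst\mathsf{A}}=0$ in $\Sym(\g)$, whence every $c_{\sst\mathsf{A}}=0$. Together with the spanning step this proves~(1), and~(2) follows.

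\noi The only genuine work is the verification of the bracket identity for $\rho$: it is there that the Koszul signs and the super Jacobi identity of Definition~\ref{def:graded_Lie} must conspire, the delicate case being the one in which the least index of $\mathsf{A}$ lies below both $\a$ and $\b$, handled by a single application of super Jacobi (equivalently, one may run Bergman's diamond lemma on the rewriting rules above, the lone overlap $z_\ga z_\b z_\a$ being resolved by precisely this identity). I would emphasize that the $\Z$-grading --- generators of both positive and negative degree --- is harmless here: all words are finite and all sums finite, so the recursion and the sign bookkeeping are blind to the signs of the degrees, and the specific well-order chosen above (with the degree-zero indices first) is immaterial for the theorem itself, although it will be convenient in Section~\ref{sec:HCP}. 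The honestly $\Z$-graded subtleties enter only later, when $\Sym(\g)$ and $\UE(\g)$ must be completed to accommodate the functional spaces of graded manifolds.
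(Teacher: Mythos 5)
Your argument is correct and is exactly the classical proof the paper defers to: the paper gives no proof of Theorem \ref{thm:PBW}, merely citing \cite{FHT} and \cite{serre} and remarking that the graded case mimics the super one, and your spanning-plus-representation-on-$\Sym(\g)$ argument (with the $(\iota x)^2=\tfrac12\iota[x,x]$ correction for odd letters and the observation that the signs of the degrees play no role in the finite sums) is precisely that standard proof. Your closing remarks --- that the chosen well-order with $I_0$ first is immaterial for the theorem itself, and that the honestly $\Z$-graded subtleties only enter with the completions later --- accurately reflect where the paper locates the genuine novelty.
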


\noindent The proof of the  PBW theorem is rather standard, an un-graded version of it can be found for example in \cite{serre}. The $\N-$ or $\Z-$ graded cases are not conceptually different from each other, and basically mimic the super ($\Z_2-$graded) case (see e.g. a recollection in \cite{FHT}). 
There is however the statement about isomorphism of coalgebras in the context; while it seems to be known to the community, we have not found an accessible proof in the literature. And since precisely in the $\Z$-graded case subtleties may occur, we will give some details of it below, namely deduce it from a more general proposition.  

\begin{proposition}\label{prop:coalgebra_iso}
Let $\g$ be a graded Lie algebra admitting a countable homogeneous basis, $\h$ be a graded Lie  subalgebra of $\g$, and $\mathfrak{m}$ be %an invariant 
a graded vector space complement to $\h$, i.e. $\g=\h\oplus\mathfrak{m}$ as graded vector spaces. %and $[\h, \mathfrak{m}]\subset \mathfrak{m}$. 
Then 
\begin{enumerate}[leftmargin = 2em]
    \item $\UE(\h)\otimes \Sym (\mathfrak{m})$ and $\UE(\g)$ are canonically isomorphic as $\bk-$coalgebras.
    \item This isomorphism respects the natural left $\UE(\h)-$module structures on the corresponding spaces.
\end{enumerate}
\end{proposition}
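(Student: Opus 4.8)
The plan is to construct the isomorphism explicitly on the level of PBW bases and then verify coalgebra compatibility directly. First I would choose a homogeneous well-ordered basis of $\g$ adapted to the decomposition $\g = \h \oplus \m$: pick a well-ordered homogeneous basis $\{x_{\a}\}_{\a \in J}$ of $\h$ and $\{x_{\b}\}_{\b \in K}$ of $\m$, and order the total index set so that every index from $J$ precedes every index from $K$. Applying Theorem \ref{thm:PBW} to $\g$, to $\h$, and the statement $\Sym(\m) \cong \Sym(\m)$ trivially, the admissible U-monomials $z_{\sst \mathsf{A}}$ with indices in $J$ form a basis of $\UE(\h)$, the admissible S-monomials $x_{\sst \mathsf{B}}$ with indices in $K$ form a basis of $\Sym(\m)$, and — crucially, using the chosen order — the products $z_{\sst\mathsf{A}} z_{\sst\mathsf{B}}$ (first the $\h$-part, then the $\m$-part, each admissible) form a basis of $\UE(\g)$. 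This last point is exactly the content of the PBW basis for the adapted ordering and is where the hypothesis that $\h$ is a \emph{subalgebra} (not merely a subspace) enters: it guarantees that straightening an $\h$-index past an $\h$-index stays within $\UE(\h)$, so the ``pure $\h$'' block is closed. Define the map
\beqn
\Phi\colon \UE(\h)\otimes \Sym(\m)\longrightarrow \UE(\g),\qquad \Phi\big(z_{\sst\mathsf{A}}\otimes x_{\sst\mathsf{B}}\big) = z_{\sst\mathsf{A}}\,z_{\b_1}\cdots z_{\b_n},
\eeq
i.e. on the second factor use the symmetrization map $\Sym(\m)\to \UE(\g)$, $x_{\sst\mathsf{B}}\mapsto z_{\sst\mathsf{B}}$ restricted to admissible monomials, and multiply on the left by the image of $\UE(\h)$. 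By the basis statement just recalled, $\Phi$ is a linear isomorphism of graded vector spaces; it is manifestly canonical once one checks independence of the auxiliary basis, which follows because both sides are characterized by the same universal/filtered data (the associated graded of the PBW filtration).

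Next I would check that $\Phi$ respects the left $\UE(\h)$-action. On $\UE(\h)\otimes \Sym(\m)$ the action is $h\cdot(u\otimes s) = (hu)\otimes s$; on $\UE(\g)$ it is left multiplication by $\iota(\h)\subset\UE(\g)$, extended to $\UE(\h)$. Since $\Phi(u\otimes s)$ is by definition the product (image of $u$)$\cdot$(image of $s$) inside $\UE(\g)$, and the image of $\UE(\h)$ in $\UE(\g)$ is a subalgebra, associativity of multiplication in $\UE(\g)$ gives $\Phi(hu\otimes s) = \text{(image of }h)\cdot\Phi(u\otimes s)$ immediately. This is the easy half of part (2) and it essentially forces the definition of $\Phi$.

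The main work, and the expected obstacle, is verifying that $\Phi$ is a morphism of coalgebras, i.e. $(\Phi\otimes\Phi)\circ\Delta = \Delta\circ\Phi$ where the comultiplication on $\UE(\h)\otimes\Sym(\m)$ is the tensor product of the coproduct on $\UE(\h)$ (restricted from $\UE(\g)$, or equivalently its own) and the standard cocommutative coproduct on $\Sym(\m)$ for which the elements of $\m$ are primitive. The clean way is \emph{not} to compute on basis monomials, but to use a generators/multiplicativity argument: the comultiplication on $\UE(\g)$ is an algebra map, the comultiplication on $\UE(\h)\otimes\Sym(\m)$ is an algebra map for the natural (super-)tensor algebra structure on the source, and $\Phi$ — while not an algebra map globally — does satisfy $\Phi\big((u\otimes 1)(1\otimes s)\big)=\Phi(u\otimes 1)\Phi(1\otimes s)$ and restricts to algebra maps on each tensor factor $\UE(\h)\otimes 1$ and $1\otimes \Sym(\m)$ (the latter being the standard symmetrization $\Sym(\m)\to\UE(\g)$, which is a coalgebra morphism by the classical computation that symmetrized products of primitives have the shuffle coproduct). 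Since $\UE(\h)\otimes 1$ and $1\otimes\m$ together generate the source as an algebra, and both $\Delta\circ\Phi$ and $(\Phi\otimes\Phi)\circ\Delta$ are ``twisted multiplicative'' in a compatible way, it suffices to check the coalgebra-morphism identity on these generators. On $\UE(\h)\otimes 1$ it holds because $\UE(\h)\hookrightarrow\UE(\g)$ is a sub-bialgebra; on $1\otimes x$ for $x\in\m$ it holds because $\iota(x)$ is primitive in $\UE(\g)$ and $x$ is primitive in $\Sym(\m)$. The delicate point requiring care in the $\Z$-graded setting is bookkeeping the Koszul signs when moving the flip $\tau\colon\UE(\g)\otimes\UE(\g)\to\UE(\g)\otimes\UE(\g)$ across the tensor factors and when commuting $\UE(\h)$-components past $\Sym(\m)$-components inside $\UE(\g)$ — one must check that the sign conventions for the tensor-product coalgebra structure on $\UE(\h)\otimes\Sym(\m)$ match those induced through $\Phi$. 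I would isolate this sign check as a short lemma and otherwise let the generator argument carry the proof. Finally, part (1) of the PBW Theorem \ref{thm:PBW} is recovered as the special case $\h = \g$, $\m = 0$ of what has been established — or rather, it is used as input; the coalgebra isomorphism $\Sym(\g)\isomto\UE(\g)$ claimed there is the case $\h=0$, $\m=\g$.
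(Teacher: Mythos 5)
Your proposal is correct and follows essentially the same route as the paper: the map $a\otimes s\mapsto a\cdot\psi(s)$ with $\psi$ the symmetrization \eqref{eq:Weyl_iso}, a PBW basis adapted to $\g=\h\oplus\m$ (with the $\h$-indices preceding the $\m$-indices) for the linear isomorphism, and the combination of ``$\Delta$ on $\UE(\g)$ is an algebra map'' with the shuffle coproduct of symmetrized primitives for the coalgebra property --- the paper simply carries out in full the shuffle computation that you cite as classical, and is terser about the multiplicativity step that you make explicit. One small point to tidy: your displayed formula $\Phi\big(z_{\sst\mathsf{A}}\otimes x_{\sst\mathsf{B}}\big)=z_{\sst\mathsf{A}}z_{\b_1}\cdots z_{\b_n}$ is the ordered-monomial map, which sends a basis to a basis but is basis-dependent; for the \emph{canonical} statement you should (as your prose indicates) use the symmetrization $\psi$ on the second factor and then observe, as the paper does, that $z_{\sst\mathsf{A}}\,\psi(x_{\b_1}\od\cdots\od x_{\b_n})$ differs from $z_{\sst\mathsf{A}}z_{\b_1}\cdots z_{\b_n}$ only by terms of smaller monomial length, so the change of basis is unitriangular and $\Phi$ remains a linear isomorphism.
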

\begin{proof}
Define 
$\varphi (a\otimes y_1\od \ldots \od y_p)=
a \,\psi (y_1\od \ldots\od y_m)$ for $a\in \UE(\h)$, $y_1, \ldots, y_p\in\m$, where
\beq\label{eq:Weyl_iso}
\psi (y_1\od \ldots\od y_m)=\frac{1}{p!} \sum_{\sigma\in S_n} \e (\sigma)
\iota y_{\sigma(1)}\ldots \iota y_{\sigma (p)}
\eeq
for $y_1, \ldots, y_p\in\g$.
First, we check that $\varphi$ is an isomorphism of vector spaces. Since, by the condition of the proposition, $\g$, as a vector space, admits a countable basis, both subspaces $\h$ and $\m$ have the same property. We choose a basis in each of them and combine them; this will give a basis in the whole space $\g$. Let us order the obtained basis of $\g$ in ascending order so that the vectors from the basis of $\h$ are strictly less than the vectors from the basis of $\m$ (i.e. the whole $\h$ comes before the whole $\m$). Denote the basis constructed by us as $\{x_{\a_j}\}_{\a_j\in I}$. Let $z_{\a_j}=\iota x_{\a_j}$ as above.

\noi
We take into account that for every admissible $\mathsf{A}=\{\a_1, \ldots, \a_m\}$
and $\sigma\in S_m$ one has
\beqn
z_{\a_{\sigma(1)}}  \ldots  z_{\a_{\sigma(m)}}=
\e (\sigma) z_{\mathsf{A}} + \mathrm{linear\; combination\; of\; z_{\mathsf{A'}} \; of\; length\; <m}
\eeq
By Theorem \ref{thm:PBW} the set of admissible monomials $\{z_{\mathsf{A}}\}$ is a basis of $\UE(\g)$, hence so is $\{\psi (x_{\a_1}\od \ldots\od x_{\a_{p}})\}$ 
for all admissible $\{\a_1, \ldots, \a_p\}$, such that the corresponding change of basis is lower unitriangular (with respect to the grading by the monomial length). This immediately proves that $\psi$ is a $\bk-$linear isomorphism.

\noi Now we shall verify that $\psi$ respects the comultiplication. It is easy to see that for every $y_1, \ldots, y_m\in\g$ the following formulas hold true
\beq
\Delta \big( y_1 \od\ldots\od y_m\big) &=&
\sum_{p=0}^m\sum_{\sigma\in Sh(p,m-p)}
\e (\sigma) y_{\sigma (1)}\od \ldots \od y_{\sigma (p)} \otimes y_{\sigma (p+1)}\od \ldots \od y_{\sigma (m)} \\
\Delta \big( \iota y_1 \ldots\iota y_m\big) &=&
\sum_{p=0}^m\sum_{\sigma\in Sh(p,m-p)}
\e (\sigma) \iota y_{\sigma (1)} \ldots  \iota y_{\sigma (p)} \otimes \iota y_{\sigma (p+1)} \ldots  \iota y_{\sigma (m)}\,,
\eeq
where $Sh (p,m-p)=\{\sigma\in S_m \,|\, \sigma (1)<\ldots <\sigma (p)\,,\, \sigma (p+1)<\ldots <\sigma (m) \}$. Therefore
\beqn
\big(\psi\otimes\psi\big)\Delta \big( y_1 \od\ldots\od y_m\big)=
\sum_{p=0}^m
\frac{1}{p!}\frac{1}{(m-p)!}
\sum_{
    %\mathclap{
        \substack{
            \tau\in S_p\times S_{m-p} \\
            \sigma\in Sh(p,m-p)
       %}
    }
} 
\e (\sigma)\e (\tau) 
\iota y_{\sigma\tau (1)} \ldots  \iota y_{\sigma\tau (p)} \otimes \\ \nonumber
\otimes \iota y_{\sigma\tau (p+1)} \ldots  \iota y_{\sigma\tau (m)} =
\sum_{p=0}^m
\frac{1}{p!}\frac{1}{(m-p)!} \sum_{\sigma\in S_m}
\e (\sigma) \iota y_{\sigma (1)} \ldots  \iota y_{\sigma (p)} \otimes \iota y_{\sigma (p+1)} \ldots  \iota y_{\sigma (m)}
\eeq
On the other hand
\beqn
\Delta\psi \big( y_1 \od\ldots\od y_m\big)=
\Delta \big(\Sym (y_1, \ldots, y_m)\big)=\\ \nonumber
=
\frac{1}{m!} \sum_{p=0}^m
\sum_{
    %\mathclap{
        \substack{
            \tau\in S_m \\
            \sigma\in Sh(p,m-p)
       %}
    }
} 
\e (\sigma)\e(\tau)
\iota y_{\tau\sigma (1)} \ldots  \iota y_{\tau\sigma (p)} \otimes \iota y_{\tau\sigma (p+1)} \ldots  \iota y_{\tau\sigma (m)}
= \\ \nonumber
\frac{1}{m!} \sum_{p=0}^m \big|Sh(p,m-p)\big| 
\sum_{\sigma\in S_m}
\e (\sigma) \iota y_{\sigma (1)} \ldots  \iota y_{\sigma (p)} \otimes \iota y_{\sigma (p+1)} \ldots  \iota y_{\sigma (m)}
\eeq
From $\big|Sh(p,m-p)\big|=\frac{m!}{p!(m-p)!}$ we immediately derive the identity 
\beqn 
\big(\psi\otimes\psi\big)\Delta \big( y_1 \od\ldots\od y_m\big)=
\Delta\psi \big( y_1\od \ldots\od y_m\big)
\eeq
which holds true for all $y_1, \ldots, y_m\in\g$. Thus we conclude that $\big(\psi\otimes\psi\big)\Delta=\Delta\psi $ and, as a corollary, prove the first statement of the proposition. 
 
\noi The second statement of Proposition \ref{prop:coalgebra_iso} follows from the compatibility of the product and coproduct.
$\square$
\end{proof}

%%%%%%%%%%%% section Harish-Chandra %%%%%%%%%%

\section{Harish--Chandra pairs for $\Z$-graded algebras} \label{sec:HCP}

\noi In this section, we will discuss Harish-Chandra pairs for different situations. Some of the material, including the principle construction of Harish-Chandra pairs, is known in the literature (\cite{Kostant:1975,Vishnyakova:2011}). However, the honest $\Z$-graded case, while being very natural, has not been extensively studied, and we are also not aware of papers where pairs were considered where the corresponding Lie subalgebra is a superalgebra, which is exactly in the spirit of decoupling the degree and the parity we have mentioned before. We will show that one can work with such pairs of this more general type in the same way as with pairs where all elements of the subalgebra have even parity.

\noi Subsection \ref{subsec:formal_group} is a kind of entr\'{e}e to get a taste of the theory. It can be said that we are dealing with Harish-Chandra pairs with zero subalgebra.

\noi The formalism of Harish-Chandra pairs, being applied to Lie supergroups (\cite{Kostant:1975,Vishnyakova:2011}), allows to explicitly construct the corresponding Hopf algebra of functions in terms of the Lie superalgebra $\g=\g_{\underline{0}}\oplus \g_{\underline{1}}$ and the Lie group which integrates the even\footnote{Here we use the underlined subscripts to label the components of the Lie superalgebra, to distinguish it from a $\Z$-graded Lie algebra generated only by elements of degree $0$ and $1$.} part $\g_{\underline{0}}$. In subsection \ref{subsec:even_subalgebra}, we briefly explain this mechanism using a somewhat more general example of a Lie superalgebra, where the corresponding subalgebra is even. Then by a simple remark %\ref{subsec:super_subalgebra}, 
we generalize this approach to the case of Lie super subalgebras and apply it to the case of $\Z-$graded algebras in subsection \ref{subsec:graLie}.

%%%%%%%%%%%%%%  Subsection formal groups

\noi\subsection{Formal groups via Hopf algebras.}\label{subsec:formal_group}
It is known that the universal enveloping algebra of the Lie algebra $\g$, completed by the polynomial filtration, contains a formal group integrating $\g$; in particular, the product of two elements of the form $\exp{x}$ for $x\in \g$ can be calculated using Baker–Campbell–Hausdorff (BCH) formula. To be more detailed, the BCH formula (in the Dynkin's form, \cite{Dynkin:1947}) asserts that
\beq
\exp{\vec{u}}\exp{\vec{v}}=\exp{Z(\vec{u},
\vec{v})}\,
\eeq
where $\vu=\sum\limits_{j\in I} u^{\a_j}x_{\a_j}$, $\vv=\sum\limits_{j\in I} (v)^{\a_j}x_{\a_j}$ and
\beq\label{eq:BCH}
Z(\vu,\vv)=
\sum_{n = 1}^\infty\frac {(-1)^{n-1}}{n}
\sum_{\begin{smallmatrix} r_1 + s_1 > 0 \\ \vdots \\ r_n + s_n > 0 \end{smallmatrix}}
\frac{[ \vec{u}^{\, r_1} \vv^{\,s_1} \vu^{\, r_2} \vv^{\, s_2} \dotsm \vu^{\, r_n} \vv^{\, s_n} ]}{\left(\sum\limits_{j = 1}^{n} (r_j + s_j)\right) \cdot \prod\limits_{i = 1}^{n} r_i! s_i!}\, ,
\eeq
\beqn
[ \vec{u}^{\, r_1} \vv^{\,s_1} \vu^{\, r_2} \vv^{\, s_2} \dotsm \vu^{\, r_n} \vv^{\, s_n}  ] = [ \underbrace{\vu,[\vu,\dotsm[\vu}_{r_1} ,[ \underbrace{\vv,[\vv,\dotsm[\vv}_{s_1} ,\,\dotsm\, [ \underbrace{\vu,[\vu,\dotsm[\vu}_{r_n} ,[ \underbrace{\vv,[\vv,\dotsm \vv}_{s_n} ]]\dotsm]]
\eeq
Decompose vector $Z(\vu,\vv)$ into the basis $\{x_{\a_j}\}_{\a_j\in I}$ of $\g$
\beqn
Z(\vu,\vv)=\sum_{j\in I} Z^{\a_j} (\vu, \vv) x_{\a_j}\,.
\eeq
Considering $(u^{\a_j})$ as coordinates on the formal group and identifying formal power series in variables $(u^{\a_j}, v^{\a_j})$ with the completed tensor product $\bk[[u^{\a_j}]]\hat{\otimes}\bk[[v^{\a_j}]] $, we obtain the comultiplication in the algebra of formal power series in variables $u^{\a_j}$ by the formula
\beq\label{eq:comult_formal}
\Delta (u^{\a_j})=Z^{\a_j} (\vu, \vv) \,.
\eeq
On the other hand, since $\UE(\g)$ is a Hopf algebra, the dual space to it is again a Hopf algebra.\footnote{As above, while defining a bialgebra, we allow completion of the tensor product. In what follows, by $\hat \otimes$ we will denote the completed tensor product, but also sometimes omit the hat sign, when it does not lead to confusion.} Identifying $\UE(\g)$ with $\Sym(\g)$ as graded coalgebras by formula \eqref{eq:Weyl_iso} in Proposition \ref{prop:coalgebra_iso}, we automatically identify the dual space $\UE(\g)^*=\Hom_\bk \big(\UE(\g), \bk\big)$ with graded formal power series of variables $u^{\a_j}$. It is not surprising that the comultiplication on $\UE(\g)^*$ coincides with the one on the formal group, given by formula \eqref{eq:comult_formal}. The last statement becomes almost tautological if we introduce a dual admissible basis in the dual space $\UE(\g)^*$ according to the following rule: we decompose the expression $\exp{\vec{u}}$ into homogeneous components
\beqn
\exp{\vu} =
\sum_{\b_1, \ldots, \b_m}
\frac{1}{m!}\, \psi \left( x_{\b_1}\od\cdots\od x_{\b_m}
\right) u^{\b_m}\cdots u^{\b_1}
=\\ \nonumber
=\sum_{\substack{
            \mathsf{B}=\{\b_1, \ldots, \b_m\}- \\
            \mathrm{admissible}
           }           }
\frac{1}{\scriptstyle|\mathsf{B}|!}
\psi \left( x_{\b_1}\od\cdots\od x_{\b_m}
\right) u^{\b_m}\cdots u^{\b_1}\,,
\eeq
where $|\mathsf{B}|! = d_1!\cdots d_r!$ for
$\mathsf{B} = \{\underbrace{\a_1, \ldots, \a_1}_{d_1},
\ldots, \underbrace{\a_r, \ldots, \a_r}_{d_r}
\}$,
and call the monomials $\frac{1}{|\mathsf{B}|!}u^{\b_m}\cdots u^{\b_1}$ admissible whenever so is the sequence $\mathsf{B}$. Then it is obvious that any element from the $\UE(\g)^*$ expands into a formal infinite linear combination of those monomials, and the resulting correspondence between admissible bases in $\UE(\g)$ and $\UE(\g)^*$ is canonical.

\begin{rem}\label{rem:formal_neighborhood}
Assume we are able to integrate a Lie (super)algebra $\g$ into a smooth Lie (super)group $G$. Then the formal group, constructed out of $\g$ in subsection \ref{subsec:formal_group} in terms of the corresponding Hopf algebra of formal power series, can be viewed as the formal neighborhood of the identity in $G$.
\end{rem}

%%%%%%%%%% subsection Harish-Chandra for an even subgroup

\noindent\subsection{Harish-Chandra pairs %as the formal neighborhood of a subgroup.
as a  ``semi-formal'' integration.}\label{subsec:even_subalgebra}
Let us consider a more general situation. Assume there is a pair $(\g,\h)$ of a Lie algebra $\g$ and a Lie subalgebra $\h\subset \g$. Let us also assume that we have succeeded in integrating a subalgebra $\h$ into a Lie group $H$ simultaneously with the adjoint action of $\h$ on $\g$; we call $(H, \g)$ a \emph{Harish-Chandra pair}. Given such a pair, one constructs a  
Hopf algebra $\cA$, %(\cite{Kostant:1975}), 
which represents a ``semi-formal'' (in the sense of \cite{AKVS}) group integrating $\g$ (see also Proposition \ref{prop:formal_neighborhood_subgroup}). By definition,
\beq\label{eq:Harish-Chandra_Hopf}
\cA = \Hom_{\UE(\h)}\left(
\UE(\g), \cF(H)\right)\,,
\eeq
where the action of $\UE(\h)$ on functions on $H$ is generated by left-invariant super vector fields on $H$. The multiplication of two $\bk-$linear morphisms $\Phi_1$ and $\Phi_2$ in $\Hom_{\bk}\left(
\UE(\g), \cF(H)\right)$ is determined as the following composition of maps
\beq
\UE(\g)\xrightarrow{\Delta}\UE(\g)\otimes \UE(\g)\xrightarrow{\Phi_1\otimes \Phi_2} \cF (H)\otimes \cF (H)\xrightarrow{\mu} \cF (H)\,,
\eeq
where $\mu$ is the multiplication of smooth functions on $H$. Since both maps $\Delta$ and $\mu$ are $\UE(\g)-$invariant (see Examples \ref{ex:equivariance_of_comult} and \ref{ex:equivariance_of_mult}), the resulting map descends to the space of $\UE(\h)-$invariants. 

\noi  
The comultiplication in $\cA$, being regarded as a $\UE(\h)\otimes \UE(\h)-$equivariant map from $\cA$ to bi-linear functionals on $\UE (\g)$ with values in $\cF(H\times H)$, is given for any $\phi\in\cA$ by the following formula:
\beq
\Delta \left(\phi\right)(h_1, a_1, h_2, a_2)=
\phi (h_1 h_2, Ad_{h_2}^{-1}(a_1)a_2)\,,
\eeq
where $h_1, h_2\in H$, $a_1, a_2\in \UE(\g)$. The equivariance property \eqref{eq:Harish-Chandra_Hopf} for $\phi$ can be reformulated as
\beqn
\phi (h, za)=\left(\frac{\pt}{\pt \lambda}\right)_{\la=0}
\phi (h\exp{\la z},a)
\eeq
for any $h\in H$, $z\in \h$, $a\in \UE(\g)$. We have to check that $\Delta (\phi)$ obeys the same property with respect to both pairs of arguments $(h_1, a_1)$ and $(h_2, a_2)$. Indeed, for any $z\in \h$ one has
\beqn
\Delta \left(\Phi\right)(h_1, za_1, h_2, a_2)=
\phi (h_1 h_2, Ad_{h_2}^{-1}(za_1)a_2)=
\phi (h_1 h_2, Ad_{h_2}^{-1}(z)Ad_{h_2}^{-1}(a_1)a_2)=
\\ \nonumber
\left(\frac{\pt}{\pt \lambda}\right)_{\la=0}
\phi (h_1 h_2exp{Ad_{h_2}^{-1}(\la z)}, Ad_{h_2}^{-1}(a_1)a_2)=
\left(\frac{\pt}{\pt \lambda}\right)_{\la=0}
\phi (h_1 (exp{\la z})h_2, Ad_{h_2}^{-1}(a_1)a_2)=
\\ \nonumber
\left(\frac{\pt}{\pt \lambda}\right)_{\la=0}
\Delta \left(\phi\right)(h_1 exp{\la z}, a_1, h_2, a_2)
\hspace{10em}
\eeq
Similarly, 
\beqn
\Delta \left(\phi\right)(h_1, a_1, h_2, z a_2)=
\phi (h_1 h_2, Ad_{h_2}^{-1}(a_1)za_2)=
\phi (h_1 h_2, z Ad_{h_2}^{-1}(a_1)a_2)+
\\ \nonumber
\phi (h_1 h_2, [Ad_{h_2}^{-1},z](a_1)a_2)=
\left(\frac{\pt}{\pt \lambda}\right)_{\la=0}
\left(
\phi (h_1 h_2, Ad_{h_2\exp{\la z}}^{-1}(a_1)a_2)
\right)+ \\ \nonumber
\left(
\phi (h_1 h_2 \exp{\la z},Ad_{h_2}^{-1}(a_1)a_2 )
\right)=\left(\frac{\pt}{\pt \lambda}\right)_{\la=0}
\phi (h_1 h_2 \exp{\la z},Ad_{h_2\exp{\la z}}^{-1}(a_1)a_2 )= \\ \nonumber
\left(\frac{\pt}{\pt \lambda}\right)_{\la=0}
\Delta \left(\phi\right)(h_1, a_1, h_2\exp{\la z},  a_2)
\hspace{7em}
\eeq
An antipode $S$ acts on $\phi\in\cA$ as follows:
\beq\label{eq:antipode_HC}
S(\phi)(h,a)=\phi\left(h^{-1}, Ad_h (a)\right)\,, \hspace{2mm} \forall h\in H, a\in\UE(\g)\,.
\eeq
In the same way as it was shown above that comultiplication preserves the $\UE(\h)$-equivariance property, one proves that $S$ obeys the same rule, i.e. it preserves $\cA$.

\begin{rem}\label{rem:HC_with_complement}
Let $\m$ be a vector space complement to $\h$ in $\g$.
By Proposition \ref{prop:coalgebra_iso}, there is a canonical isomorphism of coalgebras over $H(\h)$
\beq\label{eq:iso_coalgebras_over_UEh} 
\Hom_{\bk}\left(
\UE(\g), \cF(H)\right)\simeq \Hom_{\bk}\left(
\UE(\h)\otimes \Sym (\m), \cF(H)\right)
\,,
\eeq
therefore $\cA$ is canonically isomorphic to $\Hom_{\bk}\left(\Sym (\m), \cF (H)\right)$. This will immediately show that the obtained Hopf algebra is isomorphic as an algebra to functions on the ``semi-formal'' manifold $H\times \m$, which are smooth on $H$ and formal with respect to the linear coordinates on $\m$.
\end{rem}

\begin{proposition} \label{prop:formal_neighborhood_subgroup}
Let $\g$ be a finite-dimensional even Lie algebra. Let us integrate $(\g, \h)$ into a pair of a Lie group $G$ and its Lie subgroup $H$. Then the ``semi-formal'' manifold mentioned in Remark \ref{rem:HC_with_complement}, determined by the Harish-Chandra pair $(H,\g)$ is isomorphic to the formal neighborhood of $H\subset G$.
\end{proposition}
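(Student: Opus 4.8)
The plan is to identify both objects through their function algebras and to exhibit one explicit comparison map. Write $I\subset\cF(G)$ for the ideal of smooth functions vanishing on $H$ and $\widehat{\cF(G)}_I=\varprojlim_n\cF(G)/I^n$ for the $I$-adic completion; by definition this is the algebra of functions on the formal neighbourhood of $H$ in $G$ (I will assume $H$ embedded in $G$, so that this and the exponential chart below behave). Since $H$ is a subgroup, the multiplication, inversion and unit of $G$ carry $H$ (resp. $H\times H$) into $H$, and because $I_{H\times H}^n$ and $\sum_{p+q=n}I^p\hat\otimes I^q$ generate the same topology on $\cF(G\times G)$, pulling back and completing makes $\widehat{\cF(G)}_I$ a Hopf algebra and the formal neighbourhood a ``semi-formal'' group. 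On the other side, Remark \ref{rem:HC_with_complement} already gives $\cA\cong\Hom_\bk\!\big(\Sym(\m),\cF(H)\big)$ as algebras, i.e. realizes $\cA$ as functions on the semi-formal manifold $H\times\m$. So it suffices to produce a Hopf-algebra isomorphism $\widehat{\cF(G)}_I\isomto\cA$.

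First I would build the map on $\cF(G)$ itself. Identifying $\g$ with left-invariant vector fields on $G$ and $\UE(\g)$ with left-invariant differential operators $L_a$, set $\Phi_f(a)=(L_af)\big|_H\in\cF(H)$ for $f\in\cF(G)$, $a\in\UE(\g)$. Then $\Phi_f\in\cA$: for $z\in\h$ one has $h\exp(\lambda z)\in H$, whence $\Phi_f(za)(h)=(L_zL_af)(h)=\big(\tfrac{\pt}{\pt\lambda}\big)_{\lambda=0}\Phi_f(a)(h\exp\lambda z)$, which is exactly the equivariance \eqref{eq:Harish-Chandra_Hopf}. Moreover $f\mapsto\Phi_f$ is a unital algebra map: the element $a\mapsto\varepsilon(a)\,1_{\cF(H)}$, where $\varepsilon\colon\UE(\g)\to\bk$ is the counit, is the unit of $\cA$ and equals $\Phi_1$; and multiplicativity is the ``associative Leibniz rule'' $L_a(fg)=\sum (L_{a_{(1)}}f)(L_{a_{(2)}}g)$ for $\Delta a=\sum a_{(1)}\otimes a_{(2)}$ (Example \ref{ex:equivariance_of_mult}), which is precisely the product formula defining $\cA$. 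Finally it respects comultiplication and antipode: for $\phi=\Phi_f$ the coproduct formula $\Delta(\phi)(h_1,a_1,h_2,a_2)=\phi(h_1h_2,Ad_{h_2}^{-1}(a_1)a_2)$ corresponds to $m^{*}f$, and \eqref{eq:antipode_HC} to pullback along $g\mapsto g^{-1}$; both are identities about left-invariant operators on $G\times G$ which may be checked on generators $a_i\in\g$, where they reduce to the chain rule (for the coproduct one uses $h_1\exp(sX)\,h_2=h_1h_2\exp(s\,Ad_{h_2}^{-1}X)$).

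Because an operator $L_a$ of order $<n$ annihilates $I^n$ along $H$, the map $\Phi$ is $I$-adically continuous and kills $\bigcap_nI^n$ (the functions flat along $H$), so it extends to a Hopf morphism $\widehat\Phi\colon\widehat{\cF(G)}_I\to\cA$, $\widehat\Phi\big((f_n)_n\big)(a)=(L_af_n)\big|_H$ for any $n>\mathrm{ord}(a)$. To prove $\widehat\Phi$ bijective I would pass to exponential coordinates transverse to $H$: since $\g=\h\oplus\m$, the map $\m\times H\to G$, $(X,h)\mapsto h\exp X$, has invertible differential along $\{0\}\times H$, hence is a diffeomorphism from an open neighbourhood of $\{0\}\times H$ onto an open neighbourhood of $H$, and therefore identifies $\widehat{\cF(G)}_I$ with the completion of $\cF(\m\times H)$ along the ideal of $\{0\}\times H$, namely $\cF(H)\,\hat\otimes\,\bk[[\m^{*}]]\cong\Hom_\bk\!\big(\Sym(\m),\cF(H)\big)$. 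Under this chart $\widehat\Phi$ sends a function to the collection of its transverse Taylor coefficients along $H$; and via the coalgebra splitting $\UE(\g)\cong\UE(\h)\otimes\Sym(\m)$ of Proposition \ref{prop:coalgebra_iso} an element of $\cA$ is determined by, and takes arbitrary $\cF(H)$-values on, $\Sym(\m)\subset\UE(\g)$ (this is Remark \ref{rem:HC_with_complement}). Hence $\widehat\Phi$ is precisely the resulting identification $\cF(H)\,\hat\otimes\,\bk[[\m^{*}]]\cong\Hom_\bk\!\big(\Sym(\m),\cF(H)\big)\cong\cA$, in particular a bijection, and the proposition follows.

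The main obstacle is exactly this last identification: checking that iterated left-invariant differentiation in the $\m$-directions, restricted to $H$, together with the factorial normalisations of \eqref{eq:Weyl_iso}, reproduces the ordinary Taylor expansion in the chart $(X,h)\mapsto h\exp X$ — equivalently, that $\psi$ of \eqref{eq:Weyl_iso} is dual to the monomial basis of $\bk[[\m^{*}]]$ under the pairing $a\mapsto(L_af)(h)$. Everything else is routine once this normalisation is pinned down: well-definedness and continuity of $\widehat\Phi$, the identity $\bigcap_nI^n=\{f:\ \text{all derivatives of }f\ \text{vanish on }H\}$, the verification that $\widehat{\cF(G)}_I$ is a Hopf algebra, and the generator-level checks of the Hopf compatibilities.
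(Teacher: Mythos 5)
Your proposal is correct and follows essentially the same route as the paper: the comparison map you construct, $f\mapsto\big(a\mapsto (L_af)\big|_H\big)$, is exactly the paper's epimorphism $\tilde\phi\mapsto\phi$ with $\phi(h,x_1\cdots x_m)$ given by iterated derivatives of $\tilde\phi(he^{\la_1x_1}\cdots e^{\la_mx_m})$ at $\la_i=0$. The only difference is one of completeness: the paper stops at exhibiting this surjection onto $\cA$ and declares the Hopf compatibilities and the identification with the formal neighbourhood evident, whereas you supply the missing steps (factoring through the $I$-adic completion, killing $\bigcap_n I^n$, and proving bijectivity in the exponential chart $(X,h)\mapsto h\exp X$ via the coalgebra splitting of Proposition \ref{prop:coalgebra_iso}), which is a genuine improvement in rigor rather than a different method.
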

\begin{proof}
To prove the assertion of the Proposition, we present an explicit epimorphism of Hopf algebras $\cF (G)\to \cA$, where $\cA$ is as in equation \eqref{eq:Harish-Chandra_Hopf} for the Harish-Chandra pair $(H, \g)$: given any $\tilde\phi\in\cF(G)$, define $\phi\in\cA$, such that
\beqn
\phi (h,x_1\cdots x_m)=\left(\frac{\pt}{\pt\la_1}\right)_{\la_1=0}\cdots \left(\frac{\pt}{\pt\la_m}\right)_{\la_m=0}
\tilde\phi \left(h, he^{\la_1 x_1}\cdots e^{\la_m x_m}
\right)
\eeq
for all $h\in\H$, $x_1,\ldots, x_m\in\g$. The surjectivity of the map $\tilde\phi\mto\phi$ is beyond doubt. The verification that we have obtained a morphism of Hopf algebras follows from the definition of the algebra $\cA$, which, frankly speaking, is arranged in such a way that the map defined above ``respects'' the structure of Hopf algebras. $\square$
\end{proof}

\begin{example}[Integration of a Lie superalgebra by use of Harish-Chandra pairs]\label{ex:Harish-Chandra_for_Lie_super}
Let $\g=\g_{\underline{0}}\oplus\g_{\underline{1}}$ be a finite-dimensional Lie superalgebra. Regard the decomposition of $\g$ into even and odd parts as a reductive decomposition, i.e. let $\h=\g_{\underline{0}}$,
$\m=\g_{\underline{1}}$. Then one can make $(\h,\m)$ into a Harish-Chandra pair by integrating the Lie algebra $\g_{\underline{0}}$ together with its linear action on $\g_{\underline{1}}$. The obtained semi-formal manifold is isomorphic to a Lie supergroup integrating $\g$; given that linear coordinates on the vector space complement $\m=\g_{\underline{1}}$ to $\g_{\underline{0}}$ are odd and thus nilpotent, the corresponding formal power series contain only a finite number of non-zero terms, and are the same as smooth functions of $\m$. It was proved \cite{Kostant:1975,Vishnyakova:2011} that the category of Harish-Chandra pairs of the type described here is equivalent to the category of Lie supergroups.
\end{example}

\noindent The Hopf algebra, constructed out of a Harish-Chandra pair, is a left-
 (right-) $\g-$module, where the corresponding structure 
 for any $\Phi\in \cA =\Hom_{\UE(\h)}\left(
\UE(\g), \cF(H)\right)$, $z\in \g$, $a\in \UE(\g)$, and $h\in H$
 is given by the formulas
 \beq
 \left(x^l\Phi\right)(h,a) &=& \Phi (h, ax) \\
 \left(x^r\Phi\right)(h,a) &=& \Phi (h, Ad^{-1}_h(x)a)
 \eeq

%%%%%%%%%%%%%%%%%  Harish-Chandra for super subalgebra

 \begin{rem}[The super subalgebra case]
%\textbf{%Harish-Chandra pairs: }
\label{rem:super_subalgebra} Let $(\g, \h)$ be a pair consisting of a Lie superalgebra and its subalgebra which is not necessarily even. Then it turns out that it is sufficient to apply the Harish-Chandra technique to the even part of $\h$ thanks to the canonical isomorphism
\beq
\Hom_{\UE(\h)}
\left(
\UE(\g), 
\Hom_{\UE(\h_{\underline{0}})}
\left(
\UE(\h), \cF (H_{\underline{0}})
\right)
\right)
\simeq
\Hom_{\UE(\h_{\underline{0}})}
\left(
\UE(\g), \cF (H_{\underline{0}})
\right)
\eeq
    
 \end{rem}

%\subsection{Group-like object}

%%%%%%%%%%%%% Harish-Chandra pairs for graded Lie %%

\subsection{Integration of graded Lie algebras} \label{subsec:graLie} A graded Lie algebra (Definition \ref{def:graded_Lie}) is an excellent example of a reductive superalgebra, where the set of elements of degree zero is chosen as a subalgebra $\h$, and a direct sum of subspaces consisting of homogeneous elements of nonzero degree as a complement $\m$:
\beqn
\h=\g_0\,, \hspace{2mm} \m=\bigoplus_{i\ne 0} \g_i
\eeq
We now use the isomorphism \eqref{eq:iso_coalgebras_over_UEh} that respects the comultiplication. 
Following the approach to $\Z$-graded manifolds from \cite{AKVS} (see also the Appendix \ref{app:Z-cat}), we endow $\cF (H)\otimes \Sym (\m)$, viewed as a $\cF(H)-$coalgebra, with a canonical filtration by coideals as in \eqref{eq:cofiltration}. Then, on the dual space $\Hom_{\cF(H)}\left(\cF (H)\otimes \Sym (\m), \cF (H)\right)$, which is isomorphic as an algebra to $\cA$ (defined in \eqref{eq:Harish-Chandra_Hopf}), a canonical filtration by graded ideals arises. This defines the structure of a semi-formal graded manifold on the ``spectrum'' of $\cA$, i.e. on $H\times\m$.

%Refer to end of section \ref{sec:Z-cat} for filtrations.

%%%%%%%%%%%%%%%%%%%%%%%%%%%%%%%%%%%%%%%%%%%

\subsection{Graded groups -- global description} \label{subsec:grgr}

In \cite{DGLG} we have defined \emph{graded Lie groups} as monoid objects (in the category of graded manifolds) which are groups.
The definition works verbatim for any grading: $\Z_2, \N$ or $\Z$, but now knowing the structure of $\Z$-graded manifolds (cf. Appendix \ref{app:Z-cat}) we can be more explicit about the constructions and also explain the subtleties of the integration procedure.

Let $\g$ be a finite dimensional graded super Lie algebra; as before with the degree $deg(\cdot)$ compatible with the parity $p(\cdot)$, but $p(\cdot)$ \emph{is not necessarily} $(deg(\cdot)\!\! \mod 2)$. Decompose it to parity even and odd parts:
$$\g=\g_{\underline{0}}\oplus \g_{\underline{1}}.$$
Let us forget for a second about the grading and integrate $\g$ to a supergroup $\underline G$, using the approach from \cite{Kostant:1975, Vishnyakova:2011} or any other appropriate technique. The result can be viewed using the super (i.e. $\Z_2$-graded) Harish--Chandra pair:
$\underline G \simeq (G_0, \g)$, in a straightforward way for the \cite{Kostant:1975, Vishnyakova:2011}  method, and after some work for others.

Now consider the gradings on $\g$, the Lie algebra part of this pair. It can be viewed as a differentiation -- the linear (Euler) vector field $\epsilon_\g$: as usual this differentiation has integer eigenvalues and homogeneous functions lie in corresponding eigenspaces. And this is a general fact: gradings are in one-to-one correspondence with such differentiations (see e.g. \cite{grab-hom} and references therein).

\noi Let us ask ourselves the question, what object will be induced on the Lie supergroup by means of the derivation of its Lie algebra, given by the linear Euler vector field?

\noi The answer to this question is the Van Est correspondence between cocycles on a Lie group and on its Lie algebra (\cite{vanest}). The Van Est correspondence always works in one direction, namely, each group $p$-cocycle corresponds to a $p$-cocycle on the Lie algebra (for this, the differential of the group cocycle at the identity is used). Conversely, for each Lie algebraic cocycle to correspond to a group cocycle, the group must be simply connected. Given that a derivation of a Lie algebra can be viewed as a 1-cocycle on this algebra with values in the adjoint representation, the Van Est correspondence produces a 1-cocycle on the corresponding simply connected Lie group also with values in the adjoint representation of the Lie group on its Lie algebra; such a 1-cocycle is transformed by left translations into a multiplicative vector field on the group (cf.~ \cite{DGLGa} which is the extended arxiv version of \cite{DGLG}).

\noi Recall that a \emph{multiplicative vector field} $X$ on a graded group $G$ is a vector field compatible with multiplication $m \colon G  \times G \to G$, in a sense that $(X, X)$ is $m$-related to $X$.
In contrast to what is described in \cite{DGLG} we are not integrating a homological odd vector field (nilpotent differentiation), but an even one $\epsilon_\g$. Thus, when the even part of $\underline G$ is simply connected, this gives a unique multiplicative vector field $X$ on $\underline G$. % which will define the grading on it.  
\begin{rem}
 We have denoted the graded group by $\underline G$ to stress the fact that the result of integration came initially from the parity and consideration of associated superalgebras, but we will drop the underlining from now on.     
\end{rem}

It is then natural to give the following:
\begin{deff}[$\Z$-graded Lie supergroup] \label{def:ZgraLie}
We call a \emph{$\Z$-graded Lie group} a Lie supergroup $G$ equipped with a multiplicative even vector field $X$, s.t. the corresponding differentiation $\epsilon_{\g}$ defines a $\Z$-grading (in the above sense) on the corresponding Lie algebra $\g$. 
\end{deff}

\begin{rem}
   By a \emph{$\Z$-graded Lie supergroup} we mean \emph{global $\Z$-graded Lie supergroup} in contrast to
   a \emph{local} one described above in \ref{subsec:graLie} or in \cite{DGLG} -- the difference will be clear below. 
\end{rem}

\begin{proposition}\label{prop:gr_Lie_group_locus}
The structure of this multiplicative vector field $X$ on $G$ around its zero locus is similar to the structure of the linear vector field $\epsilon_\g$ on the Lie algebra $\g$, i.e. $X$ is an Euler vector field with the same eigenvalues. 
\end{proposition}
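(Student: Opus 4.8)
The plan is to transfer the structure of $\epsilon_\g$ near the origin of $\g$ to a neighborhood of the zero locus of $X$ on $G$ via the Van Est picture already invoked in the text. First I would recall that $X$ arises from a $1$-cocycle $c\colon G\to\g$ on the (simply connected even part of the) Lie supergroup $G$ with values in the adjoint representation, satisfying $c(gh)=c(g)+\mathrm{Ad}_g\,c(h)$, whose differential at the identity $e$ is the derivation $\epsilon_\g\colon\g\to\g$. The cocycle identity forces $c(e)=0$, so $e$ lies in the zero locus of $X$; more precisely the zero locus of $X$ is exactly the fixed-point subgroup of the flow of $X$, which near $e$ is the subgroup integrating $\ker\epsilon_\g=\g_0$, i.e. $H=G_0$. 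So ``around its zero locus'' means: on a neighborhood of $H=G_0$ inside $G$.

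Next I would choose adapted coordinates. By the semi-formal / Harish--Chandra description from subsections \ref{subsec:even_subalgebra}--\ref{subsec:graLie}, a neighborhood of $H$ in $G$ is modeled on $H\times\m$ with $\m=\bigoplus_{i\ne0}\g_i$, the identification being $(h,v)\mapsto h\exp(v)$ (using the linear coordinates $u^{\a_j}$ on $\m$ dual to a homogeneous basis of $\m$). In these coordinates $H$ is the zero locus $\{u=0\}$. The key computation is then to evaluate $X$ in the coordinates $(h,u)$: since $X$ is the left-translate of the cocycle $c$, one has $X_{h\exp(v)} = (dL_{h\exp(v)})_e\,\big(c(h\exp(v))\big)$, and using the cocycle identity $c(h\exp(v)) = c(h) + \mathrm{Ad}_h\,c(\exp(v))$. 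Now $c(\exp(v)) = \int_0^1 \mathrm{Ad}_{\exp(tv)}\epsilon_\g(v)\,dt = \frac{e^{\mathrm{ad}_v}-1}{\mathrm{ad}_v}\,\epsilon_\g(v)$, and since $\epsilon_\g(v)=\sum_i i\,v_i$ for $v=\sum_i v_i$ homogeneous, the linear part of $c(\exp(v))$ in $v$ is exactly $\epsilon_\g(v)$, the higher-order terms being Lie brackets which raise the polynomial degree in $u$. Translating to the vector field on the $\m$-factor, the linear-in-$u$ part of $X$ is precisely $\sum_j (\deg \a_j)\,u^{\a_j}\partial_{u^{\a_j}}$, i.e. the Euler vector field with eigenvalues equal to the degrees; along the $H$-directions $c$ vanishes to first order transverse to $H$, so $X$ has no component forcing any deformation there. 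Hence $X$ agrees with the linear Euler vector field $\epsilon_\g$ on $\m$ up to terms vanishing to higher order along $\{u=0\}$.

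Finally I would upgrade ``up to higher order'' to ``equal, after a change of coordinates'' by a graded Sternberg--Morse linearization argument: an even vector field whose linear part at a fixed submanifold is an Euler vector field with integer eigenvalues, acting on a space whose function algebra is the completed polynomial algebra in the $u^{\a_j}$ (the semi-formal structure of Appendix \ref{app:Z-cat}), can be brought to its linear normal form by a formal, hence in this setting convergent-by-fiat, change of the $u$-coordinates; the relevant normal-form statement in the $\Z$-graded setting is exactly the one established in \cite{LKS}, applied fiberwise over $H$ and equivariantly under the flow. This yields coordinates in which $X = \sum_j (\deg\a_j)\,u^{\a_j}\partial_{u^{\a_j}}$ on a neighborhood of its zero locus, which is the assertion. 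The main obstacle I expect is this last step: one must check that the linearizing coordinate change can be chosen $H$-equivariantly (so that it is a genuine automorphism of the semi-formal manifold $H\times\m$, not just a fiberwise one) and compatibly with the filtration by coideals from \eqref{eq:cofiltration}, so that it stays within the category of $\Z$-graded manifolds; the resonance-freeness needed for Sternberg linearization is automatic here because all eigenvalues are integers of the \emph{same} sign within each of $\m_{>0}$ and $\m_{<0}$ and the obstruction cocycles live in components of strictly higher polynomial degree, which the double filtration of \cite{AKVS} controls.
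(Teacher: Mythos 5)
Your first two steps (the cocycle description of $X$, the identification of the zero locus with $G_0$, and the choice of coordinates $H\times\m\ni(h,v)\mapsto h\exp(v)$) run parallel to the paper's argument, which handles the ``around the zero locus'' part by left-translating a neighborhood of the identity using $X_{gh}=g^l(X_h)$ when $X_g=0$. The gap is in your final step. Having only established that $X$ equals the Euler field \emph{up to higher-order terms}, you invoke a Sternberg-type linearization, and you claim resonance-freeness is automatic. It is not: a resonance is a relation $\lambda_j=\sum_k m_k\lambda_k$ with $\sum m_k\ge 2$, and with integer eigenvalues these abound even within a single sign ($2=1+1$) and certainly with mixed signs ($1=2+(-1)$). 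Concretely, a term such as $\left(u^{(1)}\right)^2\partial_{u^{(2)}}$ has weight zero, commutes with the Euler field, and therefore cannot be removed by \emph{any} formal change of coordinates; if such a resonant term were present in $X$, your strategy would terminate without proving the proposition. The normal-form results of \cite{LKS} concern homological degree-$(+1)$ vector fields and do not supply the missing linearization either. So as written the proof does not close.

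The repair is to notice that no higher-order terms arise in the first place, which is exactly what the paper does: the local flow $\Phi_t$ of the multiplicative field $X$ is a local group automorphism with $d(\Phi_t)_e=e^{t\epsilon_\g}$, and naturality of the exponential map under automorphisms gives $\Phi_t\circ\exp=\exp\circ\, e^{t\epsilon_\g}$ exactly, hence $\exp^*X=\epsilon_\g$ on the nose in exponential coordinates. Your computation misses this because you evaluate the left-logarithmic derivative $c(\exp v)=\frac{e^{\mathrm{ad}_v}-1}{\mathrm{ad}_v}\epsilon_\g(v)$ but do not compose with the inverse of $(d\exp)_v$ when passing to the coordinate expression of $X$; these two factors cancel identically, so the ``higher-order terms'' you propose to normalize away are an artifact of mixing trivializations. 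With that exact identification near $e$, the translation to a general point $g$ of the zero locus via $X_{gh}=g^l(X_h)$ finishes the proof with no normal-form machinery.
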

\begin{proof}
For simplicity, let us prove the assertion of the proposition for the case of an ``even'' Lie group. The proof for the ``super'' case will not be fundamentally different, except that one will need to use the notion of a superpoint (or work with the Hopf algebra of functions on a supergroup in the spirit of \cite{DGLG}).

\noi First, note that local integration of a multiplicative vector field $X$ on a Lie group $G$ gives us a local automorphism of this group in a neighborhood of the identity, the infinitesimal counterpart of which is the corresponding derivation $\e_\g$ in the Lie algebra $\g$ (see the Van Est correspondence). This means that the exponential map, which diffeomorphically identifies some neighborhood $U_0$ of zero in the Lie algebra with a neighborhood $U_e$ of identity in the group, $\exp\colon \left(U_0,\e_\g\right)\isomto \left(U_e, X\right)$, takes the linear Euler field $\e_\g$ on the Lie algebra to the above mentioned multiplicative vector field $X$ on the group. This argument shows that there is a neighborhood of the identity in the group in which $X$ has the form of an Euler vector field; moreover, this neighborhood admits homogeneous (for the multiplicative field $X$) coordinates obtained as a pullback of (some) linear homogeneous coordinates on the Lie algebra, with the same weights as those linear coordinates.

\noi Recall (cf.~\cite{DGLGa}) that the multiplicativity of $X$ is equivalent to $X_{gh}=g^l (X_{h})+h^r (X_{g})$ for all $g,h\in G$, where $g^l$ and $h^r$ are the left- and right- translations by $g$ and $h$, respectively. Assume now that $g$ belongs to the zero locus of $X$, i.e. $X_g=0$. Then the multiplicativity turns into the left translation property $X_{gh}=g^l (X_{h})$ for any $h\in G$. The latter implies that $g^l$ diffeomorphically maps the neighborhood $U_e$ of the identity, constructed above, to the neighborhood $U_g=g^l\left(U_e\right)$ of $g$ and, thanks to the left translation property for $X$, it ``commutes'' with $X$: $g^l\colon \left(U_e, X\right)\isomto \left(U_g, X\right)$. This proves that the restriction of $X$ onto $U_g$ will again admit homogeneous coordinates (with the same integer weights as the linear coordinates on $\g$), so $X_{|U_g}$ will also have the form of an Euler vector field.
$\square$
\end{proof}

\begin{rem}
It would be interesting to compare the definition \ref{def:ZgraLie} with the $\Z$-graded analog of homogeneity structures from \cite{grab-hom, gra-gra-ra}.
\end{rem}

It is also interesting to compare this definition with the one above in the categorical language (\cite{DGLG} and section \ref{sec:HCP}). 
In fact the following proposition holds true: 

\begin{proposition} Let $G$ be a graded Lie group in the sense of Definition \ref{def:ZgraLie}. Then the following statements are true:
\begin{itemize}[leftmargin = 2em]
    \item The zero locus $G_0$ of the multiplicative grading vector field $X$ is a Lie subgroup of $G$;
    \item The formal neighborhood of $G_0$ in $G$ is isomorphic to the semi-formal group determined by the Harish-Chandra pair $(\g,G_0)$, i.e. to the result of the integration described in Subsection \ref{subsec:graLie}. This isomorphism is homogeneous of degree zero, i.e. equivariant w.r.t. the Euler vector fields.  
\end{itemize}
\end{proposition}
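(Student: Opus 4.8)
The plan is to handle the two assertions separately. The first is a formal consequence of multiplicativity of $X$ together with Proposition \ref{prop:gr_Lie_group_locus}; the second follows by specializing Proposition \ref{prop:formal_neighborhood_subgroup} to the pair $(G_0,\g)$ and then checking that the resulting isomorphism is homogeneous of degree zero.

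For the first bullet, I would use that multiplicativity of $X$ means $X_{gh}=g^l(X_h)+h^r(X_g)$, and in particular $X_e=0$ (put $g=h=e$, where $dm_{(e,e)}$ is addition on $T_eG\oplus T_eG$). Hence if $g,h\in G_0$ then $X_{gh}=g^l(0)+h^r(0)=0$, and if $g\in G_0$ then applying the identity to $(g,g^{-1})$ gives $0=X_e=g^l(X_{g^{-1}})$, so $X_{g^{-1}}=0$; thus $G_0$ is a subgroup. It is a closed embedded submanifold because, by Proposition \ref{prop:gr_Lie_group_locus}, around each of its points $X$ is, in suitable homogeneous coordinates $x^i$ of integer weights $w_i$, the linear Euler field $\sum_i w_i\,x^i\partial_{x^i}$, whose zero locus is the coordinate subspace $\{x^i=0:\, w_i\neq 0\}$. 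So $G_0$ is a Lie subgroup, with $T_eG_0$ the zero eigenspace of the linearization of $X$ at $e$, i.e. (via $\exp$) the zero eigenspace of $\e_\g$, which is $\g_0=\h$. In the genuinely super case one runs the same computation on the Hopf algebra of functions, or with superpoints, exactly as in the proof of Proposition \ref{prop:gr_Lie_group_locus}.

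For the second bullet, set $H=G_0$, so $\mathrm{Lie}(H)=\h=\g_0$, and let $\m=\bigoplus_{i\neq 0}\g_i$ be the graded complement of $\h$ in $\g$ — precisely the reductive decomposition used in Subsection \ref{subsec:graLie}. Applying Proposition \ref{prop:formal_neighborhood_subgroup} to the Harish-Chandra pair $(H,\g)$ (in the super case $\h$ is a super, not necessarily even, subalgebra, so one first reduces to $\h_{\underline 0}$ via Remark \ref{rem:super_subalgebra} and then invokes the even statement, cf. also Example \ref{ex:Harish-Chandra_for_Lie_super}) identifies the semi-formal group determined by $(H,\g)$ — whose underlying semi-formal manifold is $H\times\m$ with function algebra $\cA\simeq\Hom_\bk(\Sym(\m),\cF(H))$ by Remark \ref{rem:HC_with_complement} — with the algebra of functions on the formal neighborhood of $G_0$ in $G$, as Hopf algebras, hence as group objects. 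Concretely the identifying epimorphism $\cF(G)\to\cA$ is the one from the proof of Proposition \ref{prop:formal_neighborhood_subgroup}, $\tilde\phi\mapsto\phi$ with $\phi(h,x_1\cdots x_m)=\left(\tfrac{\partial}{\partial\la_1}\right)_{\la_1=0}\!\!\cdots\left(\tfrac{\partial}{\partial\la_m}\right)_{\la_m=0}\tilde\phi(h,h\,e^{\la_1 x_1}\cdots e^{\la_m x_m})$, which is built from $\exp$ and left translation; I would also note that the formal neighborhood is covered by the charts $g^l(U_e)$, $g\in G_0$, glued by left translations compatibly with $X$, so that it is indeed a semi-formal graded manifold modeled on $H\times\m$.

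Finally I would check homogeneity of degree zero. On the semi-formal side, the Euler field of Subsection \ref{subsec:graLie} is trivial on $\cF(H)$ (since $\h$ sits in degree $0$) and acts on $\Sym(\m)$ with the tautological weights of the homogeneous generators of $\m$. On the other side, $X$ vanishes on $G_0$, hence preserves the ideal of $G_0$ and all its powers and so descends to the formal neighborhood; and by Proposition \ref{prop:gr_Lie_group_locus} the exponential map intertwines the linear Euler field $\e_\g$ on $\g$ with $X$. Since $\e_\g$ is trivial on $\h$ and carries the tautological weights on $\m$, the pull-back of $X$ to the trivialized normal neighborhood $H\times\m$ coincides with the Euler field on the semi-formal side. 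As the isomorphism of Proposition \ref{prop:formal_neighborhood_subgroup} is itself defined through $\exp$, it intertwines these two Euler fields, which is the asserted homogeneity. The main obstacle I anticipate is exactly this last step: verifying that the abstract Hopf-algebra isomorphism of Proposition \ref{prop:formal_neighborhood_subgroup} really is the $\exp$-induced one and hence graded — i.e. reconciling the explicit formula $\tilde\phi\mapsto\phi$ with the grading coming from the filtration-by-coideals picture of \cite{AKVS} used in Subsection \ref{subsec:graLie} — and, in the super case, checking that the reduction of Remark \ref{rem:super_subalgebra} is compatible with the grading; everything else is bookkeeping.
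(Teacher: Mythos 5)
Your proposal is correct and follows essentially the same route as the paper: the first bullet combines Proposition \ref{prop:gr_Lie_group_locus} (local Euler form of $X$, hence $G_0$ is a submanifold cut out by homogeneous coordinates) with the multiplicativity identity $X_{gh}=g^l(X_h)+h^r(X_g)$ to get closure under the group operations, and the second bullet is deduced from Proposition \ref{prop:formal_neighborhood_subgroup} applied to $(G_0,\g)$ together with Proposition \ref{prop:gr_Lie_group_locus}. You simply spell out details the paper leaves implicit (the explicit closure-under-inverse computation, the identification $T_eG_0=\g_0$, and the $\exp$-intertwining argument for degree-zero homogeneity), which is a faithful elaboration rather than a different argument.
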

\begin{proof} As before, for simplicity, we assume that $G$ is an ``even'' Lie group.
According to Proposition \ref{prop:gr_Lie_group_locus}, for each point of the zero locus of the vector field $X$, there is a neighborhood of this point, such that in this neighborhood the field $X$ has the form of an Euler field. From this it is easy to conclude that the condition on the zero locus $G_0$ is described as a joint set of zeros of some subset of local homogeneous coordinates, which means that the zero locus of $X$ is a smooth submanifold of $G$. It follows from the multiplicativity condition for $X$ that $G_0$ is closed under group operations, and thus is a Lie subgroup of $G$,  which proves the first assertion of the proposition. 

\noi The second statement is a consequence of the Proposition \ref{prop:formal_neighborhood_subgroup} for the Harish-Chandra pair $(G_0, \g)$ and Proposition \ref{prop:gr_Lie_group_locus}.
 $\square$
\end{proof}

\section{Universal enveloping algebroids %Hopf algebroids
}
\label{sec:algebroids}
The idea of applying the similar integration technique to algebroids sounds very natural and may seem straightforward. And as we will see below, some constructions indeed ``work out of the box'', there are however important subtleties to take into account which we will point out.

\noi\subsection{Commutative Hopf algebroids.}\label{subsec:Hopf} In this subsection, we give or recall the necessary information about commutative (but in general non-cocommutative) Hopf algebroids, which reproduce in an algebraic context the properties of an algebra of functions on a groupoid and are related to commutative Hopf algebras in the same way that Lie groupoids are related to Lie groups. There is also a theory of non-commutative Hopf algebroids (\cite{Lu:1996, ping}), which generalize in the same direction the properties of (quantum) universal enveloping algebras. While for a Hopf algebra over a field, the dual vector space also has the structure of a Hopf algebra, up to some modifications in infinite dimension (for the correct definition of a product on the dual space in the topological case, one should consider a topologically completed tensor product\footnote{Already having in mind smooth functions, one realizes that the completion is necessary. And for notations again, in the diagrams the hat in $\compTens$ will sometimes be omitted.}, in the algebraic case, the so called ``finite dual space''), for Hopf algebroids the duality is less trivial and the definitions of dual algebraic structures are different. Below we will see this using the example of a universal enveloping algebroid.

\begin{deff}[Hopf algebroid]\label{def:Hopf_algebroid}
A topological \emph{graded Hopf algebroid} over a commutative ring\footnote{In this article, a commutative ring is always a field of characteristic 0. } $\bk$
is a pair of (Fr\'echet graded) commutative associative %\footnote{The commutative property can be relaxed, cf.~\cite{ping}}
unital $\bk-$algebras $(\cR,\cH)$ with the structure maps:
\begin{enumerate}[leftmargin = 2em]
\itemsep0em
%\item a multiplication $\mu \colon \cH \compTens_\cR \cH \to \cH$,
%\item a unit $\eta\colon \bk\to \cH$,
\item
a left unit/source map $\eta_L \colon \cR \to \cH$;
\item
a right unit/target map $\eta_R \colon \cR \to \cH$;
\item
a comultiplication/composition map $\Delta \colon \cH \to \cH \compTens_\cR \cH$, where the (completed) tensor product is taken with respect to the left- and right- $\cR-$module structures, determined by $\eta_L$ and $\eta_R$, respectively;
\item
a counit/identity map $\varepsilon \colon \cH \to \cR$;
\item
an antipode/conjugation/inverse map $S \colon \cH \to \cH$,
\end{enumerate}
satisfying the following axioms:
\begin{enumerate}[leftmargin = 2em]
\itemsep0em

\item
counit laws
$(\id_\cH \otimes \varepsilon) \circ \Delta = (\varepsilon \otimes \id_\cH) \circ \Delta = \id_\cH$, $\varepsilon\eta_L=\varepsilon\eta_R=\id_\cR$;
\item source/target laws $\Delta\circ \eta_L = \eta_L\otimes 1$, $\Delta\circ \eta_R = 1\otimes \eta_R$;
\item
coassociativity
$(\id_\cH \otimes \Delta) \circ \Delta = (\Delta \otimes \id_\cH) \circ \Delta$;
\item the comultiplication is a morphism of the unital algebras;
\item
the antipode identities
$\mu \circ (\id_\cH \otimes S) \circ \Delta =  \eta_L \circ \epsilon$,
$\mu \circ (S \otimes\id_\cH) \circ \Delta = \eta_R \circ \epsilon$, where $\mu$ is the multiplication in $\cH$.
\end{enumerate}

\noi The last property (of the antipode) can be expressed by the following commutative diagram

\beq\label{eq:diagram_Hopf}
     \xymatrix{
      \cH & \ar[l]_{\id\cdot S} \cH\otimes \cH \ar[d]\ar[r]^{S\cdot\id} & \cH \\
      & \cH\otimes_\cR \cH\ar@{-->}[ur]\ar@{-->}[ul] & \\
      \cR\ar[uu]^{\eta_L} & \cH\ar[u]_\Delta
      \ar[l]_\e \ar[r]^\e & \cR\ar[uu]_{\eta_R}
             }
 \eeq

\noindent where $\id\cdot S = \mu \circ (\id_\cH \otimes S)$, $S\cdot\id = \mu \circ (S \otimes\id_\cH)$.

\noi By omitting the antipode structure together with the antipode identity we obtain the notion of a topological \emph{unital and counital graded bialgebroid}.
\noindent By dropping off the unit and the counit structures and the related identities, we obtain the general notion of a topological \emph{graded bialgebroid}.
\end{deff}

\begin{rem}
One notices easily that the above definition is a natural generalization (``oidification'') of the definition of graded Hopf algebra recalled in \cite{DGLG}, for the (graded) commutative associative case.
\end{rem}

\noi The definition of a Hopf algebroid dualizes the notion of a groupoid in the following sense: for any associative commutative ring $\cB$ over $\bk$, $\Hom_\bk (\cH,\cB)$ and $\Hom_\bk (\cR,\cB)$ gives us the set of morphisms and objects of a groupoid, respectively; we will say that this groupoid structure is parameterized by $\cB$. The counit laws imply that the dual counit map $\varepsilon^*$ acts as a two-sided identity for the groupoid parameterized by a commutative $\bk-$ring. Coassociativity corresponds to the associativity of the composition of morphisms. 

%%%%%%%%%%% Example: action groupoid %%%%

\begin{example}[Action groupoid]\label{ex:action_grpd}
Let $G$ be a Lie group acting on a manifold $M$. Denote the action map by $a$, i.e. $a\colon \cF (M)\to \cF(M\times G)$, such that $a(z,g)=g^{-1}z$ for any $g\in G$, $z\in M$. Introduce the following notations: 
$\cR=\cF (M)$ and $\cH=\cF(M\times G)=\cF(M) \compTens_\bk\cF (G)$. The structure maps of the corresponding Hopf algebroid $\cH$ are:
\begin{itemize}[leftmargin = 2em]
    \item[-] left unit $\eta_L\colon \cR\to\cH$, $\eta_L (f)(z,g)=f(z)$, i.e. $\eta_L=\id\otimes_\bk 1$;
    \smallskip
    \item[-]  right unit $\eta_L\colon \cR\to\cH$,
    $\eta_R (f)(z,g)=f(g^{-1}z)$, i.e. $\eta_R=a^*$;
    \smallskip
    \item[-] comultiplication $\Delta\colon \cH\to\cH\otimes_\cR\cH=\cF (G\times G\times M)$, 
    \beqn\Delta (\phi)(z, g_1,g_2)=\phi (z, g_1g_2)\,;
    \eeq
    %\smallskip
    \item[-] antipode $S\colon \cH\to\cH$, 
    $S (\phi)(z, g)=\phi (g^{-1}z, g^{-1})$,
\end{itemize}
for all $f\in\cR$, $\phi\in\cH$, $g,g_1,g_2\in G$, $z\in M$.
\end{example}

%%%%%%%%%%%%%%%% Lie-Rinehart %%%%

\noi\subsection{Lie-Rinehart pairs and their universal enveloping algebras.}\label{subsec:Lie-Rinehart}

\begin{deff}[Lie-Rinehart pair, \cite{Rinehart:1963}]\label{deff:Lie-Rinehart}
Let $\bk$ be a commutative ring, and let $\cR$ be a commutative $\bk$-algebra. Let $L$ be a Lie ring that is also an $\cR$-module. Suppose that we are given a Lie
ring and $\cR$-module homomorphism (an anchor) $\rho$ from $L$ to the $\cR$-derivations of $\cR$. 
If $x\in L$, we will denote the image of $x$ under this homomorphism by $r \mto x(r)$. 
Suppose finally that, for all $x,y\in L$ and $r\in\cR$,
\beqn
[x, ry] = r[x, y] + x(r)y
\eeq
We will call such $(\cR, L)$ an $\cR-$algebra or a Lie-Rinehart pair.
\end{deff}

\noi Let $(\cR,L)$ be a Lie-Rinehart pair. The left $\cR-$module $\cR\oplus L$ has a natural structure of a Lie ring, given by 
\beqn
\left[(r_1,x_1),(r_2,x_2)\right]=\left(x_1 (r_2)-x_2 (r_1),[x_1,x_2]\right)
\eeq
for any $r_1,r_2\in \cR$, $x_1, x_2\in L$. 
\begin{deff}[Universal enveloping algebra of a Lie-Rinehart pair, \cite{Rinehart:1963}]\label{deff:U_Lie-Rinehart}
Let $\UE^+ (\cR\oplus L)$ be the subalgebra of the universal enveloping algebra of $\cR\oplus L$, generated by the image of the natural inclusion $\cR\oplus L\hookrightarrow \UE^+ (\cR\oplus L)$. The universal enveloping algebra of the Lie-Rinehart pair is the quotient algebra
\beq\label{eq:u_Lie-Rinehart}
\UE (\cR, L)=\UE^+ (\cR\oplus L)/I\,,
\eeq
where $I$ is the two-sided ideal generated by elements  $r'(r+x)=r'r+r'x$ for all $r,r'\in\cR$, $x\in L$.
\end{deff}

\begin{example}[Lie algebroid, cf.~\cite{LSX,Moerdijk-Mrcun:2010}]\label{ex:Lie_algebroid}
A Lie algebroid $(E,[\cdot,\cdot],\rho)$ over a smooth base $M$ gives us a Lie-Rinehart pair with $L=\Gamma (E)$, $\cR=\cF (M)$, where the anchor homomorphism is induced on sections of $E$ by $\rho$ (we will denote it by the same symbol). The corresponding universal enveloping algebra is known as the universal enveloping algebroid of $E$. 
\end{example}

\noindent In the case where a Lie-Rinehart pair $(\cR,L)$ is fixed, we denote its universal enveloping algebra $\UE (\cR, L)$ simply as $\UE$. It has a natural $\cR-$bimodule structure with respect to the left- and right- multiplication on elements of the ring. By construction, 
$\UE$ is an associative (generally non-commutative) algebra. The tensor product of $m$ copies of $\UE$ over $\bk$
\beq\label{eq:k-tensor}
\sT^m_\bk \left(\UE\right)=
 \underbrace{\UE\otimes_\bk\cdots\otimes_\bk\UE}_m
\eeq
is also an associative unital algebra. Taking into account the fact that the Lie-Rinehart pair, and hence its universal algebra, are fixed, we will denote \eqref{eq:k-tensor} simply as $\sT^m_\bk$.
If $\cR$ is a unital ring, then so are $\UE$ and $\sT^m_\bk$, with the unit being  canonically induced by the one in $\cR$.  

\noi
Consider $\UE_m$, the tensor product of $m$ copies of $\UE$ over $\cR$ regarded as left $\cR-$modules; in other words, $\UE_m = \sT^m_\bk/I_{\sst R}$,
where $I_{\sst R}$ is a right $\sT^m_\bk-$ideal, generated by elements
\beq\label{eq:R-tensor_identities}
r u_1\otimes_\bk \cdots \otimes_\bk u_i \otimes_{\bk}\cdots \otimes_\bk u_m - u_1\otimes_\bk \cdots \otimes_\bk ru_i \otimes_{\bk}\cdots \otimes_\bk u_m
\eeq
for all $i=2,\ldots, m$, $r\in\cR$, and $u_1, \ldots, u_m\in \UE$. If $\cR$ is unital %- in what follows in this subsection we will consider only such rings - 
then in formula \eqref{eq:R-tensor_identities} it suffices to take all $u_i$ equal to $1$. Observe that $\UE_m$ has one canonical structure of a left $\cR-$module and $m$ different canonical structures of right $\cR-$modules, given by the right multiplication of elements $r\in\cR$ on different components of the tensor product: 
\beqn
u_1\otimes\cdots \otimes u_i\otimes\cdots u_m \mto
u_1\otimes\cdots \otimes u_i r\otimes\cdots u_m
\eeq
for $i=1,\ldots, m$. Note also that, while $\UE_m$ is a right $\sT^m_\bk-$module, it is no more a left $\sT^m_\bk-$module, since $I_{\sst R}$ is not a two-sided ideal. Thus $\UE_m$ is not an algebra. The situation can be "corrected" by replacing $\UE_m$ with $\overline{\UE}_m$, where the latter is defined as the locus of coincidence of $m$ right $\cR-$module structures on $\UE_m$. i.e. with the space of $\sum_\a u_{\a 1}\otimes \cdots \otimes \cdots u_{\a m}\in \overline{\UE}_m$, such that
    \beqn
    \sum_\a u_{\a 1}\otimes \cdots \otimes r u_{\a i}\otimes \cdots\otimes \cdots u_{\a m}=
    \sum_\a u_{\a 1}\otimes \cdots \otimes r u_{\a j}\otimes \cdots\otimes \cdots u_{\a m}
    \eeq
    for all $i,j=1, \ldots, m$. 
One can easily verify that $\overline{\UE}_m$ is an associative ring with the obvious multiplication rule
\beqn
\left( u_1\otimes\cdots \otimes\cdots u_m\right)
\left( v_1\otimes\cdots \otimes\cdots v_m\right)=
u_1v_1\otimes\cdots \otimes\cdots u_m v_m
\eeq
for all $u_1, v_1, \ldots, u_m, v_m\in \UE$. Indeed, let $\pi_{\sst R}$ be the projection $\sT^m_\bk\to \UE_m$. From 
\beq\label{eq:proj_prod}
\pi_{\sst R} (\bar{u}\bar{v})=\pi_{\sst R}(\bar{u})v
\eeq
for any $\bar{u}, \bar{v}\in\sT^m_\bk$ 
and the fact that left multiplication by elements of $\overline{\UE}_m$ preserve $I_{\sst R}\subset {\UE}_m$ we immediately deduce that $\pi_{\sst R} (\bar{u}\bar{v})$ depends only on $\pi_{\sst R} (\bar{u})$ and $\pi_{\sst R} (\bar{v})$.

\begin{rem}
    As far as we know, the above construction of $\overline{\UE}_m$ was proposed and used in \cite{kapranov07}, while the interpretation of $\overline{\UE}_m$ for the unital case that follows (see below) appeared in \cite{Moerdijk-Mrcun:2010}.
\end{rem}

\noi When $\cR$ is unital, there is an alternative construction of $\overline{\UE}_m$. Consider a subalgebra $\overline{\sT}^m_\bk$ 
of $\sT^m_\bk$ consisting of elements that preserve $I_{\sst L}$ under left multiplication. From \eqref{eq:proj_prod} it turns out that the action of elements from this subalgebra on the quotient space $\UE_m = \sT^m_\bk/I_{\sst R}$ is uniquely determined by their value at the unity $1^{\otimes m}$ and hence by their image under the projection map $\pi_{\sst R}$. Now $\overline{\UE}_m=\pi_{\sst R}\left(\overline{\sT}^m_\bk\right)$, such that $\pi_{\sst R}\colon \overline{\sT}^m_\bk\to \overline{\UE}_m$ is an epimorphism of rings. 

\begin{deff}[Left $\cR-$bialgebra]\cite{kapranov07}\label{def:left_bialgebra}
Let $\cR$ be a commutative $\bk-$ring. A left $\cR-$ 
bialgebra consists of
\begin{enumerate}%[label=\arabic*)][leftmargin = 2em]
    \item A possibly noncommutative algebra $\cB$ containing $\cR$.
    \item A morphism of left $\cR-$modules $\e\colon \cB\to\cR$ which is a twisted ring homomorphism:
    \beqn
    \e (uv)=\e (u\e (v))
    \eeq
    \item A homomorphism of algebras $\Delta\colon \cB\to \cB\overline{\otimes}_\cR\cB$ which is identical on $\cR$, coassociative and and has the left and right counit properties with respect to $\e$. 
\end{enumerate}
Here $\cB\overline{\otimes}_\cR\cB$ is defined in the same way as $\overline{\UE}_m$ for $m=2$, i.e. as locus of coincidence of two right $\cR-$module structures on $\cB$.
\end{deff}

\begin{proposition}\cite{kapranov07}
 The universal enveloping algebra of a Lie-Rinehart pair admits a natural structure of a left $\cR-$bialgebra with $\e$ being the standard augmentation and $\Delta$ the coproduct induced by the standard one on the universal enveloping algebra of $L$, such that $\Delta (x)=x\otimes 1+1\otimes x$ for all elements of the Lie algebra $L$, combined with $\Delta (r)=r\otimes 1=1\otimes r$ for all $r\in \cR$. 
\end{proposition}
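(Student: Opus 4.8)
The plan is to realize the two non-obvious structure maps of Definition~\ref{def:left_bialgebra} — the augmentation $\e\colon\UE\to\cR$ and the coproduct $\Delta\colon\UE\to\UE\,\overline{\otimes}_\cR\UE$ — from the universal property of $\UE=\UE(\cR,L)$, and then to verify the left $\cR$-bialgebra axioms by reducing them to the generating set $\cR\cup L$ of $\UE$. Concretely, it follows from Definitions~\ref{deff:Lie-Rinehart} and \ref{deff:U_Lie-Rinehart} that to give an algebra homomorphism $\UE\to A$ into a $\bk$-algebra $A$ is the same as to give an algebra homomorphism $\alpha\colon\cR\to A$ together with a $\bk$-linear, bracket-preserving map $\beta\colon L\to A$ satisfying $\beta(rx)=\alpha(r)\beta(x)$ and $[\beta(x),\alpha(r)]=\alpha\bigl(x(r)\bigr)$; the last two conditions are exactly what the defining ideal $I$ imposes. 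I will invoke this twice.

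For the augmentation, take $A=\End_\bk(\cR)$, $\alpha(r)=(s\mapsto rs)$ and $\beta(x)=\rho(x)$. The four requirements above are precisely the Lie--Rinehart axioms ($\rho$ is $\cR$-linear and a Lie homomorphism into the derivations of $\cR$, and $[\rho(x),\text{mult}_r]=\text{mult}_{x(r)}$), so we obtain an algebra homomorphism $\pi\colon\UE\to\End_\bk(\cR)$, i.e.\ the canonical left $\UE$-module structure on $\cR$. Put $\e(u):=\pi(u)(1_\cR)$. Then $\e|_\cR=\id_\cR$ and $\e(ru)=r\,\e(u)$, so $\e$ is a morphism of left $\cR$-modules; moreover $\pi(us)=\pi(u)\circ\text{mult}_s$ for $s\in\cR$ gives $u\cdot s=\e(us)$, whence $\e(uv)=\pi(u)\bigl(\e(v)\bigr)=u\cdot\e(v)=\e\bigl(u\,\e(v)\bigr)$, so $\e$ is a twisted ring homomorphism. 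Note that $\e$ is \emph{not} an algebra map (already $\e(xr)=x(r)\ne0=\e(x)\e(r)$ for generic $x\in L$, $r\in\cR$); this is why the counit axiom below needs a separate argument.

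For the coproduct, recall from the preceding discussion that $\overline{\UE}_2=\UE\,\overline{\otimes}_\cR\UE$ is an associative algebra, that $r\mapsto r\ot1=1\ot r$ embeds $\cR$ in it, and that $x\ot1+1\ot x$ lies in $\overline{\UE}_2$ for every $x\in L$ (the two right $\cR$-module structures agree on it, by the defining relations of $\UE_2$). Apply the universal property with $A=\overline{\UE}_2$, $\alpha(r)=r\ot1$ and $\beta(x)=x\ot1+1\ot x$: since $x\ot1$ commutes with $1\ot x'$ in $\overline{\UE}_2$, the conditions on $(\alpha,\beta)$ boil down to the bracket and anchor axioms of $(\cR,L)$ (together with $r\ot x=1\ot rx$ in $\UE_2$), so we get an algebra homomorphism $\Delta\colon\UE\to\overline{\UE}_2$ that is the diagonal on $\cR$ and sends $x$ to $x\ot1+1\ot x$. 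Coassociativity then follows from the same uniqueness principle: $(\Delta\ot\id)\Delta$ and $(\id\ot\Delta)\Delta$ are algebra homomorphisms $\UE\to\overline{\UE}_3$ (here I use that $\Delta\ot\id$ and $\id\ot\Delta$ are well-defined algebra maps $\overline{\UE}_2\to\overline{\UE}_3$, which is part of the formalism of \cite{kapranov07}), and they agree on the generators — both send $r$ to the diagonal and $x$ to $x\ot1\ot1+1\ot x\ot1+1\ot1\ot x$ — hence they coincide on all of $\UE$.

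The one delicate point is the counit law $(\id\ot\e)\Delta=\id=(\e\ot\id)\Delta$, which cannot be checked on generators because $\id\ot\e$ and $\e\ot\id$ are not algebra maps. I would argue on a spanning set instead: using $xr=rx+x(r)$ and the identification of the $\cR$-action on $L$ with left multiplication in $\UE$, one moves all factors of $\cR$ to the front, so that $\UE$ is $\bk$-spanned by the elements $r\,x_1\cdots x_n$ with $r\in\cR$ and $x_i\in L$. For $S\subseteq\{1,\dots,n\}$ write $y_S$ for the product of the $x_i$, $i\in S$, taken in increasing order, and $S^{\mathrm c}$ for the complement. Since $x_i\ot1$ commutes with $1\ot x_j$, expanding $\Delta(r\,x_1\cdots x_n)=(r\ot1)\prod_{i=1}^n(x_i\ot1+1\ot x_i)$ yields $\sum_{S}r\,y_S\ot y_{S^{\mathrm c}}$. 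Iterating $x(1_\cR)=0$ gives $\e(y_T)=0$ for every nonempty $T$, so $\id\ot\e$ annihilates all terms except $S=\{1,\dots,n\}$ and returns $r\,x_1\cdots x_n$; dually, twistedness gives $\e(r\,y_S)=\e\bigl(r\,\e(y_S)\bigr)=0$ for $S\ne\emptyset$, so $\e\ot\id$ annihilates all terms except $S=\emptyset$ and again returns $r\,x_1\cdots x_n$. This proves both counit identities. The remaining points are immediate: $\cR$ embeds into $\UE$ because $\e$ is a left inverse of the canonical map $\cR\to\UE$, and the whole construction is manifestly natural in $(\cR,L)$. The main obstacle is thus exactly this counit verification: because $\e$ is only a twisted homomorphism it forces an explicit normal-form computation rather than a reduction to algebra generators.
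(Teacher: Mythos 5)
The paper itself gives no proof of this proposition: it is stated with a citation to \cite{kapranov07} and the text moves directly on to Proposition \ref{prop:dual_to_ue}, so there is nothing in the source to compare your argument against line by line. That said, your proof is correct and is essentially the argument one would extract from \cite{kapranov07}: construct $\e$ and $\Delta$ from the universal property of $\UE(\cR,L)$ (the former via the tautological representation on $\cR$, the latter via $x\mapsto x\ot 1+1\ot x$ into $\overline{\UE}_2$, after checking that this element really lies in the coincidence locus of the two right $\cR$-module structures), verify the multiplicative identities and coassociativity on the generators $\cR\cup L$, and handle the counit law separately by a normal-form computation on the spanning set $r\,x_1\cdots x_n$, precisely because $\e$ is only a twisted homomorphism. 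You correctly isolate that last point as the one place where reduction to generators fails. The single spot I would ask you to tighten is the definition of the maps $\id\ot\e$ and $\e\ot\id$ on $\UE\,\overline{\otimes}_\cR\,\UE$: since the tensor product is taken over $\cR$ via the relation $ru'\ot u''=u'\ot ru''$, the formula $u'\ot u''\mapsto u'\,\e(u'')$ with the scalar multiplied on the right inside $\UE$ is not well defined (it changes by $[u',r]\e(u'')$ under a change of representative); the well-defined formula places the scalar on the left, $u'\ot u''\mapsto \e(u'')\,u'$, using commutativity of $\cR$ and left $\cR$-linearity of $\e$. With that convention your computation is unchanged, since in the only surviving term the scalar is $\e(1)=1$, and both counit identities follow exactly as you wrote them.
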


\begin{proposition}\label{prop:dual_to_ue}
Let $(\cR,L)$ be a Lie-Rinehart pair, $\UE=\UE (\cR, L)$ be its universal enveloping algebra. Consider $\cH=\Hom_\cR (\UE, \cR)$, the set of homomorphisms of left $\cR-$modules. Then $\cH$ has a canonical structure of a unital and counital commutative bialgebroid (see the remark after Definition \ref{def:Hopf_algebroid}), where the multiplication, comultiplicaion, left- and right- units are dual to the corresponding structures on $\UE$. In addition, X admits an antipode, which turns it into a Hopf algebroid.
\end{proposition}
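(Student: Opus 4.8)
The plan is to dualize each structure map on $\UE = \UE(\cR,L)$ using the left $\cR$-bialgebra structure furnished by the preceding proposition, carefully keeping track of which $\cR$-module structures are in play. First I would fix notation: $\UE$ is an $\cR$-bimodule, and $\cH = \Hom_\cR(\UE,\cR)$ means homomorphisms of \emph{left} $\cR$-modules; since $\cR$ is commutative, $\cH$ itself carries a commutative algebra structure whose multiplication is dual to the coproduct $\Delta\colon \UE \to \UE \,\overline{\otimes}_\cR\, \UE$ of Definition \ref{def:left_bialgebra}. Concretely, for $\phi,\psi \in \cH$ set $(\phi\cdot\psi)(u) = \sum \phi(u_{(1)})\,\psi(u_{(2)})$ where $\Delta u = \sum u_{(1)} \otimes u_{(2)}$; the fact that this lands in $\cR$ and is well-defined uses precisely that $\Delta$ takes values in the coincidence locus $\overline{\otimes}_\cR$, i.e. the two right $\cR$-actions agree, so the pairing $\phi \otimes \psi$ against it is unambiguous. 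Associativity and commutativity of this product follow from coassociativity and (the relevant symmetry of) $\Delta$. The unit of $\cH$ is dual to the augmentation/counit $\e\colon \UE \to \cR$, i.e. $1_\cH = \e$, and one checks it is a unit using the counit axioms for $\e$.

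Next I would produce the bialgebroid structure maps on $\cH$. The left and right unit maps $\eta_L,\eta_R\colon \cR \to \cH$ are dual to the two $\cR$-module structures on $\UE$: $\eta_L(r)$ is the map $u \mapsto \e(ru) = r\,\e(u)$ (using that $\e$ is left $\cR$-linear), and $\eta_R(r)$ is $u \mapsto \e(ur)$, which is the twisted homomorphism property $\e(ur)=\e(u\,\e(r))$ — here one must check $\eta_R$ is a ring homomorphism into $\cH$ and that $\eta_L,\eta_R$ induce the claimed left- and right- $\cR$-module structures on $\cH$ making $\Delta_\cH$ a map to $\cH \compTens_\cR \cH$. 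The counit $\varepsilon_\cH\colon \cH \to \cR$ is evaluation at $1 \in \UE$: $\varepsilon_\cH(\phi) = \phi(1)$. The comultiplication $\Delta_\cH\colon \cH \to \cH \compTens_\cR \cH$ is dual to the multiplication $m\colon \UE \otimes_\cR \UE \to \UE$; i.e. $\Delta_\cH(\phi)$ is the functional $(u,v) \mapsto \phi(uv)$, regarded as an element of $\cH \compTens_\cR \cH$ via the identification of the latter with (a suitable completion of) left-$\cR$-linear maps $\UE \otimes_\cR \UE \to \cR$. The bialgebroid axioms of Definition \ref{def:Hopf_algebroid} — counit laws, source/target laws, coassociativity, and multiplicativity of $\Delta_\cH$ — then translate one-for-one into the associativity and unitality of $m$, the bimodule compatibilities, and the fact (from the previous proposition) that $\Delta$ on $\UE$ is an algebra map; I would verify each by a short diagram chase rather than writing out elements.

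Finally, for the antipode I would dualize the antipode $S_\UE$ on $\UE$. Although $\UE(\cR,L)$ as defined does not come equipped with an antipode in Definition \ref{def:left_bialgebra} (a left $\cR$-bialgebra has no antipode axiom), the universal enveloping algebra of the Lie ring $\cR \oplus L$ does carry the standard antipode $x_1\cdots x_m \mapsto (-1)^m x_m \cdots x_1$ of \eqref{eq:antipode_UEg}, and one checks it descends to $\UE(\cR,L)$ because it sends the ideal $I$ of Definition \ref{deff:U_Lie-Rinehart} to itself — here is where I expect the main obstacle, since $S_\UE$ is an anti-homomorphism and swaps the left and right $\cR$-module structures, so one must verify it is compatible with the bimodule structure in the ``twisted'' sense needed for $S_\cH := (S_\UE)^*$ to satisfy the antipode identities $\mu\circ(\id\otimes S)\circ\Delta = \eta_L\circ\varepsilon$ and $\mu\circ(S\otimes\id)\circ\Delta = \eta_R\circ\varepsilon$ of the diagram \eqref{eq:diagram_Hopf}. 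Concretely, $S_\cH(\phi)(u) = \phi(S_\UE(u))$, and the two antipode identities come from dualizing the two identities $m\circ(\id\otimes S_\UE)\circ\Delta = \iota_\cR\circ\e$ and $m\circ(S_\UE\otimes\id)\circ\Delta = (\text{right }\iota_\cR)\circ\e$ on $\UE$; establishing these on $\UE$ is the real content, and it reduces — by the left $\cR$-bialgebra structure and multiplicativity of $\Delta$ — to checking them on the generators $r \in \cR$ and $x \in L$, where they are immediate ($\Delta x = x\otimes 1 + 1\otimes x$, $S_\UE x = -x$, $\Delta r = r \otimes 1$, $S_\UE r = r$). Once these hold on $\UE$, the Hopf algebroid axioms for $\cH$ follow by transposition, completing the proof.
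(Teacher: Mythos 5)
Your construction of the bialgebroid structure on $\cH$ — multiplication dual to the coproduct $\Delta$ on $\UE$, comultiplication dual to the product, left/right units dual to the two $\cR$-module structures, counit given by evaluation at $1$ — is correct and is exactly the route the paper takes (it attributes this to Kapranov's formal integration of Lie algebroids and does not write out the diagram chases either). The genuine gap is in the antipode. The standard antipode $x_1\cdots x_m\mapsto(-1)^m x_m\cdots x_1$ does \emph{not} descend to $\UE(\cR,L)$: applying it to the generator $r\otimes x - rx$ of the ideal $I$ of Definition \ref{deff:U_Lie-Rinehart} gives $-x\otimes r + rx$, and since $x\otimes r - r\otimes x = x(r)$ already holds in $\UE(\cR\oplus L)$, this equals $-(r\otimes x - rx) - x(r)$; the term $x(r)\in\cR$ is not in $I$ unless the anchor vanishes. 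Concretely, for the tangent algebroid $\UE(TM)=\sD(M)$, an anti-automorphism $S$ fixing $\cF(M)$ and sending vector fields to their negatives would have to satisfy $S(fX)=S(X)S(f)=-X\circ f=-fX-X(f)$ while also $S(fX)=-fX$ because $fX$ is again a vector field; this forces $X(f)=0$ for all $f$. (A formal adjoint of differential operators does exist, but it requires a choice of volume form — auxiliary data not present in the Lie-Rinehart pair.)

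This failure is precisely the point the paper makes in the paragraph following the proposition: the antipode on $\cH$ is \emph{not} the transpose of any antipode on $\UE$, because inversion in a groupoid exchanges source-fibers and target-fibers (left-invariant and right-invariant vector fields), and its construction in general requires a connection; the paper defers the explicit general construction to a future article and carries it out only for action algebroids, where the flat Cartan connection singles out the Lie algebra elements as flat sections (see the antipode formula in Example \ref{ex:action_alg}), and for $TM$, where the antipode on $\sJ(M)$ is the flip of the two factors of $M\times M$ near the diagonal — an automorphism of the commutative algebra $\cH$ defined directly, not by dualizing anything on $\sD(M)$. So your plan is sound up to and including the bialgebroid axioms, but the final step $S_\cH:=(S_\UE)^*$ starts from a map that does not exist, and the reduction "check the antipode identities on generators" would not close the argument even if it did, since $\mu\circ(\id\otimes S)\circ\Delta$ is not an algebra map.
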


\noindent The proof of the Proposition \ref{prop:dual_to_ue} would essentially reproduce the formal integration procedure for Lie algebroids written in \cite{kapranov07}. However, we did not find in the above paper an explicit construction of the antipode, which is defined in a more complex way than other structural morphisms, since it is not directly determined in terms of the universal enveloping algebroid (without involving auxiliary constructions). The latter is explained by the fact that the antipode in a groupoid interchanges left-invariant vector fields parallel to the source-fibers (corresponding to sections of the Lie algebroid) and right-invariant fields parallel to the target-fibers. In the case of action Lie algebroids, this obstacle is bypassed due to the flat Cartan connection, which singles out the elements of the Lie algebra as flat sections. In a more general case, an explicit construction of the antipode would require a more general (non-flat) connection. We are going to write more about this in another article.

%%%%%%%%%%%%% Example: action algebroid

\begin{example}[Action algebroid]\label{ex:action_alg}
Let $\g$ be a Lie algebra acting on a manifold $M$. Denote the action map by $\rho$, i.e. $\rho\colon \g\to \Gamma (TM)$ is a Lie algebra morphism. Let $E=\g\times M$ be the coresponding action algebroid, where the bracket on $\Gamma (E)$ and the anchor map are canonically extended from the bracket on $\g$ and the action map $\rho$ by use of the combination of the linearity and the Leibniz rule.

\noi Let $\cR=\cF (M)$ be the algebra of functions on the base $M$ and let $\cH=\Hom_\cR (\UE(E),\cR)=\cR\otimes_\bk\UE(\g)^* $ be the space of left $\cR-$modules morphisms $\UE (E)\to\cR$ (see Proposition \ref{prop:dual_to_ue}). The structure maps of the Hopf algebroid $\cH$ are explicitly defined\footnote{In what follows, we will denote the elements of the universal enveloping algebra $\UE(\g)$ and the universal enveloping algebroid $\UE(E)$ by the same letters, which is naturally motivated by the fact that one is a subspace in the other. But in order to avoid confusion, we nevertheless in each case will indicate the exact belonging of the elements.} as follows:
for all $u\in\UE (\g)$, $z\in M$, $f\in\cR$
\begin{itemize}[leftmargin = 2em]
    \item[--] left unit $\eta_L\colon \cR\to\cH$, $\eta_L (f)(z,u)=f(z)$, i.e. $\eta_L=\id\otimes_\bk 1$;
    \smallskip
    \item[--] right unit $\eta_R\colon \cR\to\cH$,
    $\eta_R (f)(z,u)=(uf)(z)$, where 
    %$S$ in the standard antipode in the universal enveloping algebra $\UE (\g)$, such that 
    %\beqn
    %S(x_1\cdots x_m)=(-1)^m x_m\cdots x_1
   % \eeq
    %and 
    \beqn
    (x_1\cdots x_m)f=\rho(x_1)\cdots\rho (x_m)(f)\,, \hspace{2mm}
    \forall x_1,\ldots, x_m\in \g
    \eeq
    \end{itemize}
   For all $\phi\in\cH$, $u,v\in\UE(\g)$, $z\in M$,
   \begin{itemize}[leftmargin = 2em]
    \item[--]  comultiplication $\Delta\colon \cH\otimes_\cR\cH=\cR\otimes_\bk\left(\UE(\g)\otimes\UE(\g)\right)^*$, 
    \beqn
    \Delta (\phi)(z, u,v)=\phi (z,uv)\,;
    \eeq
    \item[--] (algebroid) antipode 
    $S\colon \cH\to\cH$, 
    \beq 
    S (\phi)(u,z)=
    %\sum_\a \big(S(u'_\a)\phi (S(u_\a''), \cdot)\big)(z)=
    \sum_\a\eta_R \big(  \phi (S(u_\a^{(2)}), \cdot) \big)(u_\a^{(1)},z)\,,
    \eeq
    where (in Sweedler's notations)
    \beq\label{eq:Sweedler}\Delta (u)=\sum_\a u_\a^{(1)}\otimes u_\a^{(2)}\eeq
    and $\phi (S(u_\a^{(2)}), \cdot)$ is viewed as a function of $z\in M$. Here $S\colon\UE(\g)\to\UE(\g)$ in the standard antipode in the universal enveloping algebra \eqref{eq:antipode_UEg}.
\end{itemize}

\noi The proof that the introduced structures satisfy all the properties of a commutative Hopf algebroid, under the assumption that the action of the Lie algebra can be integrated to the action of the Lie group, follows from the Proposition \ref{prop:Hopf_epimorphism} below. The motivated reader, however, can verify all these properties by simple algebraic calculations without appealing to the indicated proposition.
\end{example}

\begin{rem}%{\mbox{}\vskip 2mm}
 Below we will give a simple interpretation of the right unit map (in particular, this proves that $\eta_R$ is a monomorphism of algebras).
First of all, note that $\UE(E)$ is a $\cR-$bimodule. Then $\cH$, considered as the space of left $\cR-$module morphisms $\UE (E)\to\cR$, can be uniquely endowed with the structure of a right $\cR-$module by the formula
\beqn
\left(\phi f^{\sst R}\right) (u)\colon =\phi (uf)
\eeq
for any $f\in\cR$, $u\in \UE (E)$. On the other hand,
for all $x_1,\ldots, x_m\in \g$ one has
\beqn
x_1\cdots x_m f =\sum_{p=0}^m\sum_{\sigma\in Sh(p,m-p)}
\left(x_{\sigma(1)}\cdots x_{\sigma (p)}
\right)(f)
x_{\sigma(p+1)}\cdots x_{\sigma (m)}
\eeq
or, equivalently, in Sweedler's notations \eqref{eq:Sweedler} 
\beqn uf=\sum_\a u_\a^{(1)} (f) u_\a^{(2)}=\sum_\a \eta_R (f)\left(u_\a^{(1)} ,\cdot\right) u_\a^{(2)} \,.
\eeq
Thus for any $\phi\in\cH$
\beqn
\left(\phi f^{\sst R}\right) (u) =
\phi (f u) = \left(\eta_R (f)\otimes \phi\right)\left(\Delta (u)\right)=
\left(\eta_R (f)\phi\right)(u)
\eeq
which shows that $f^{\sst R}\phi$ coincides with the multiplication on $\eta_R (f)$ in $\cH$.
\end{rem}

\begin{proposition}\label{prop:Hopf_epimorphism}
Let $\mathscr{G}=M\times G$ be an action Lie groupoid, $E=\g\times M$ be the corresponding action Lie algebroid, and $\cH_{\sst\mathscr{G}}$ and $\cH_{\sst E}$ be the associated Hopf algebroids (see examples \ref{ex:action_grpd} and \ref{ex:action_alg}). Then there is a canonical epimorphism of Hopf algebroids $\cH_{\sst\mathscr{G}}\to \cH_{\sst E}$, defined as follows: $\cH_{\sst\mathscr{G}}\ni\tilde\phi \mto \phi\in \cH_{\sst E}$,
\beq  
\phi (z,u)=\left(\frac{\pt}{\pt\lambda_1}\cdots
\frac{\pt}{\pt\lambda_m}\right)_{{\la_1=\cdots=\la_m=0}}
\tilde\phi(z,e^{\la_1 x_1}\cdots e^{\la_m x_m})
\eeq
for $u=x_1\cdots x_m\in \UE(\g)$, $x_1,\ldots, x_m\in \g$.
\end{proposition}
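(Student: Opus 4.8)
The plan is to recast the map intrinsically, check it is a morphism of bialgebroids one structure map at a time, and deal with the antipode --- the delicate point --- last. First I would rewrite $\tilde\phi\mapsto\phi$ without coordinates. Viewing $E=\g\times M$ as the Lie algebroid of the action groupoid $\mathscr{G}=M\times G$, the universal enveloping algebroid $\UE(E)$ acts on $\cH_{\sst\mathscr{G}}=\cF(\mathscr{G})$ by left-invariant differential operators along the source fibres: $x\in\g$ acts by the left-invariant vector field $(X^L_x\tilde\phi)(z,g)=\left(\frac{\partial}{\partial\lambda}\right)_{\lambda=0}\tilde\phi(z,ge^{\lambda x})$, and $r\in\cR=\cF(M)$ by multiplication by $\eta_R^{\sst\mathscr{G}}(r)$ (pullback along the target). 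Since $x\mapsto X^L_x$ is a Lie algebra homomorphism and $X^L_x$ acts on target-pullbacks through the anchor, this is an algebra homomorphism $\UE(E)\to\mathrm{DiffOp}(\mathscr{G})$; composing with evaluation on the unit section $\iota\colon z\mapsto(z,e)$ gives precisely $\phi(z,u)=\iota^*(u\cdot\tilde\phi)(z)$, which for $u=x_1\cdots x_m\in\UE(\g)\subset\UE(E)$ unwinds to the formula in the statement. Presented as $\Theta\colon\tilde\phi\mapsto\big(u\mapsto\iota^*(u\cdot\tilde\phi)\big)$, the map is manifestly well defined (independent of how $u$ is written as a product, by the relations in $\UE(\g)$), left $\cR$-linear (because $t\circ\iota=\id_M$), hence a map into $\cH_{\sst E}=\Hom_\cR(\UE(E),\cR)$, and canonical. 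Its surjectivity I would deduce from the Borel lemma applied fibrewise over $M$: every prescribed jet along $\iota$ in the $G$-directions is the jet of a smooth function on $\mathscr{G}$, in the same spirit as the surjectivity in Proposition~\ref{prop:formal_neighborhood_subgroup}.

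Next I would verify that $\Theta$ respects the remaining structure. For $\eta_L$: $\eta_L^{\sst\mathscr{G}}(f)$ is constant along source fibres, hence annihilated by every $X^L_x$, so $\Theta(\eta_L^{\sst\mathscr{G}}f)=\eta_L^E(f)$. For $\eta_R$: applying $X^L_{x_1}\cdots X^L_{x_m}$ to a target-pullback $\eta_R^{\sst\mathscr{G}}(f)$ yields the target-pullback of $\rho(x_1)\cdots\rho(x_m)f$, so $\Theta(\eta_R^{\sst\mathscr{G}}f)(z,u)=(uf)(z)=\eta_R^E(f)(z,u)$ --- with no sign discrepancy, precisely because the anchor $\rho$ of $E$ is built from the same action $a(z,g)=g^{-1}z$ that defines $\mathscr{G}$. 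For the (convolution) multiplication on $\cH_{\sst E}$ and the pointwise one on $\cH_{\sst\mathscr{G}}$: the associative Leibniz rule $u\cdot(\tilde\phi\,\tilde\psi)=\sum_\a (u_\a^{(1)}\cdot\tilde\phi)(u_\a^{(2)}\cdot\tilde\psi)$ (Example~\ref{ex:equivariance_of_mult}) gives, after $\iota^*$, that $\Theta$ is an algebra morphism. For $\varepsilon$: both counits are evaluation at the identity --- at $1\in\UE(E)$, respectively at the unit section --- and these agree under $\Theta$. Being an algebra morphism that intertwines $\eta_L$ and $\eta_R$, the map $\Theta$ is a morphism of $\cR$-bimodules, so $\Theta\otimes_\cR\Theta$ is defined on the (completed) $\cR$-tensor squares; the comultiplication identity $\Delta^E\circ\Theta=(\Theta\otimes_\cR\Theta)\circ\Delta^{\sst\mathscr{G}}$ then reduces, evaluated on $(z,u,v)$, to the fact that differentiating $(g_1,g_2)\mapsto\tilde\phi(z,g_1g_2)$ against $u$ in $g_1$ and against $v$ in $g_2$ and then setting $g_1=g_2=e$ produces the derivative of $\tilde\phi(z,-)$ against the product $uv$ along the concatenated exponential path --- that is, fibrewise group multiplication corresponds to multiplication in $\UE(\g)$.

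The antipode is the main obstacle. Because $\Theta$ is a surjective morphism of bialgebroids and $\cH_{\sst\mathscr{G}}$ is a Hopf algebroid (Example~\ref{ex:action_grpd}), the prescription $\bar S(\Theta\tilde\phi):=\Theta(S^{\sst\mathscr{G}}\tilde\phi)$ --- once it is seen to depend only on $\Theta\tilde\phi$ --- automatically obeys the antipode axioms on $\cH_{\sst E}$; so the real task is to compute $\Theta\circ S^{\sst\mathscr{G}}$ explicitly and recognize it as the antipode of Example~\ref{ex:action_alg}, which at one stroke shows that this descent is well defined and that $\cH_{\sst E}$ is a Hopf algebroid. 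Concretely one expands $\left(\frac{\partial}{\partial\lambda_1}\cdots\frac{\partial}{\partial\lambda_m}\right)_{\lambda=0}\tilde\phi\big(g(\lambda)^{-1}z,\,g(\lambda)^{-1}\big)$ with $g(\lambda)=e^{\lambda_1 x_1}\cdots e^{\lambda_m x_m}$ and $g(\lambda)^{-1}=e^{-\lambda_m x_m}\cdots e^{-\lambda_1 x_1}$: the derivatives distribute over the two arguments of $\tilde\phi$; the contributions in the group slot reassemble, via the negation and the reversal of order, into the standard antipode $S$ of $\UE(\g)$; the contributions in the base slot --- read through $a(z,g)=g^{-1}z$ --- into the anchor action $\eta_R$; and the way the derivatives split between the two slots is exactly encoded by the coproduct $\Delta(u)=\sum_\a u_\a^{(1)}\otimes u_\a^{(2)}$, yielding $\sum_\a\eta_R\big(\Theta\tilde\phi(S(u_\a^{(2)}),\cdot)\big)(u_\a^{(1)},z)$. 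The case $m=1$ already displays the mechanism: $\left(\frac{\partial}{\partial\lambda}\right)_{\lambda=0}\tilde\phi(e^{-\lambda x}z,e^{-\lambda x})=\rho(x)\big(\tilde\phi(\cdot,e)\big)(z)-(X^L_x\tilde\phi)(z,e)$, which is $\eta_R\big(\Theta\tilde\phi(1,\cdot)\big)(x,z)+\eta_R\big(\Theta\tilde\phi(-x,\cdot)\big)(1,z)$ since $S(x)=-x$. I would settle the general case by induction on $m$, peeling $e^{\lambda_1 x_1}$ off the left of $g(\lambda)$ and off the right of $g(\lambda)^{-1}$ and invoking coassociativity of the coproduct; this is routine, but it is the one place where the combinatorial bookkeeping has to be handled with care. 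All tensor products throughout are the completed ones and all maps continuous, so passing to duals and to $\otimes_\cR$ raises no additional difficulty.
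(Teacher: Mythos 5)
Your argument is correct and is essentially a fully worked-out version of what the paper dismisses as ``straightforward computations'': your intrinsic reformulation $\phi(z,u)=\iota^*(u\cdot\tilde\phi)(z)$ via left-invariant differential operators and evaluation on the unit section is exactly the paper's interpretation of the map as dual to the embedding of the formal neighborhood of the unit section, and your observation that the antipode axioms on $\cH_{\sst E}$ follow automatically from surjectivity once $\Theta\circ S^{\sst\mathscr{G}}$ is identified with $S^{\sst E}$ is precisely the point made in Remark \ref{rem:Hopf_epimorphism}. No gaps; your check of the structure maps (including the $m=1$ antipode computation and the sign bookkeeping through $a(z,g)=g^{-1}z$) is consistent with Examples \ref{ex:action_grpd} and \ref{ex:action_alg}.
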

\begin{proof}
The proof is straightforward computations.   $\square$
\end{proof} 

\begin{rem}\label{rem:Hopf_epimorphism}{\mbox{}}
%$\sH(\sG)$ $\cH$
The following points are here to interpret the meaning of ``straightforward computations'' from the proof of the proposition \ref{prop:Hopf_epimorphism} and in particular their link to the example \ref{ex:action_alg}:
\begin{itemize}[leftmargin = 2em]
     \item The structural mappings in the Example \ref{ex:action_alg} are obviously consistent with the structural mappings in the Example \ref{ex:action_grpd} and the epimorphism from the Proposition \ref{prop:Hopf_epimorphism}. On the other hand, Definition \ref{def:Hopf_algebroid} of a commutative Hopf algebroid generalizes the properties of the algebra of functions on a groupoid, and the action groupoid is certainly a groupoid, hence $\cH_{\sst\mathscr{G}}$ is ``beyond suspicion'' a Hopf algebroid. 
     Thus, even without knowing that all structures from the Example \ref{ex:action_alg} satisfy the properties of the Hopf algebroid, Proposition \ref{prop:Hopf_epimorphism} automatically convinces us of this thanks to the epimorphism and functoriality rules (whenever an action algebroid corresponds to an action groupoid).
     \smallskip
     \item The epimorphism described in Proposition \ref{prop:Hopf_epimorphism} is naturally interpreted as a morphism of Hopf algebroids dual to the embedding of a formal neighborhood of a unit section in a groupoid into the groupoid itself.
 \end{itemize}
\end{rem}

\noi The next proposition will combine the known construction of Harish-Chandra pairs for Lie algebras
with the examples \ref{ex:action_grpd} and \ref{ex:action_alg}. 
A more general approach to Harish-Chandra pairs for Lie algebroids and Lie groupoids, based on the idea that was first announced in \cite{Curitiba:2016}, will be proposed in our next article.

\begin{deff}\label{def:HC_action}
Let $E=\g\times M$ be an action algebroid, $E'=\h\times M$ be an action subalgebroid, where $\h\subset\g$ is a Lie subalgebra. Let $(H, \g)$ be a Harish-Chandra pair and
$\sG'=M\times H$ be an action groupoid integrating $E'$, such that 
\beq
h^*\rho(x)\left(h^{-1}\right)^* =\rho\left(
Adh^{-1}(x)\right)\colon \cF (M)\to\cF(M)
\eeq
for any $h\in H$ and $x\in\g$, where $h^*$ is the canonical pullback automorphism of $\cR=\cF(M)$ induced by $h$: $h^*(f)(z)=f(hz)$ for all $f\in\cR$, $z\in M$. We call $(\sG', E)$ a \emph{Harish-Chandra pair for action groupoid / algebroid}.
\end{deff}

\begin{proposition}%[Harish-Chandra construction for action groupoids]
\label{prop:Harish-Chandra_for_action_gpd}
Define 
\beq
\cH=\Hom_{\UE (\h)}\left(\UE(\g), \cF (M\times H)\right)=
\{\phi\in \Hom_\bk (\UE (\g), \cF (M\times H)\,|\,
\\ \nonumber
\phi (z,h,xu)=
\left(\frac{\pt}{\pt \la}\right)_{\la=0}\phi (z,he^{\la x},u)\,, \forall x\in \h, u\in\UE (\g), z\in M\}\,.
\eeq
Then $\cH$
is a commutative Hopf algebroid with the following structure maps:
\begin{itemize}[leftmargin = 2em]
    \item[--] left unit:  $\eta_L (f)=f\otimes1\otimes 1$, i.e.
    $\eta_L (f) (z,h,u)=f(z)\,, \forall h\in H, u\in \UE(\g), z\in M$;
    \smallskip
    \item[--] right unit: $\eta_R =\left(\eta_R^{\sst \sG'}\otimes\id\right) \left(\eta_R^{\sst E}\right)$, where  $\eta_R^{\sst E}$ and $\eta_R^{\sst \sG'}$ are the right units for $E$ and $\sG'$, respectively;
    \smallskip
    \item[--] comultiplication: $\Delta (\phi)(z, h_1, u_1, h_2,u_2)=
    \phi (z, h_1h_2, Adh_2^{-1}(u_1)u_2)$, \\
    for all $h_1,h_2\in H$, $u_1,u_2\in\UE(\g)$, $z\in M$;
    \smallskip
    \item[--] antipode: $S(\phi)(z,h,u)=\left(S^{\sG'}\otimes\id
    \right)\left(S^{E}\right)_{(1,3)}(\phi)\left(z,h,Ad_h(u)\right)$, where the subscript $(1,3)$ means that we have inserted the identity into the second slot of a triple tensor product. Here we identified $\cH$ with $\cF(M)\otimes_\bk\cF (H)\otimes_\bk\UE (\g)$. 
\end{itemize}
\end{proposition}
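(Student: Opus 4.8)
The plan is to verify that the five structure maps listed make $\cH$ into a commutative Hopf algebroid by reducing everything, as far as possible, to facts we have already established. First I would observe that $\cH$ sits inside the larger space $\Hom_\bk\bigl(\UE(\g),\cF(M\times H)\bigr)$, and that $\cF(M\times H)=\cF(M)\compTens_\bk\cF(H)$ is itself a $\UE(\g)$-module (via the anchor $\rho$ on the $\cF(M)$ factor and via left-invariant vector fields on the $\cF(H)$ factor), so that the $\UE(\h)$-equivariance condition defining $\cH$ is exactly the one appearing in equation \eqref{eq:Harish-Chandra_Hopf} and in Remark \ref{rem:HC_with_complement}. By Remark \ref{rem:HC_with_complement} (applied with the Lie algebra $\g$, subalgebra $\h$, and coefficient algebra $\cF(M\times H)$ in place of $\cF(H)$), the space $\cH$ is canonically isomorphic, as a coalgebra over $\cF(M)\otimes_\bk\cF(H)$, to $\Hom_\bk\bigl(\Sym(\m),\cF(M\times H)\bigr)$; identifying $\UE(\g)\cong\UE(\h)\otimes\Sym(\m)$ via Proposition \ref{prop:coalgebra_iso} this exhibits $\cH\cong\cF(M)\otimes_\bk\cF(H)\otimes_\bk\UE(\g)$ as claimed in the last bullet, and gives the underlying commutative algebra structure. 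This also confirms that $\eta_L$ lands in $\cH$ and is a morphism of unital algebras.

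Next I would check well-definedness of each structure map on the subspace $\cH$ of equivariants, and then the Hopf algebroid axioms of Definition \ref{def:Hopf_algebroid}. For the comultiplication $\Delta(\phi)(z,h_1,u_1,h_2,u_2)=\phi\bigl(z,h_1h_2,\mathrm{Ad}_{h_2}^{-1}(u_1)u_2\bigr)$, the argument that $\Delta(\phi)$ lies in the correct equivariant subspace with respect to both pairs $(h_1,u_1)$ and $(h_2,u_2)$ is formally identical to the two displayed computations in Subsection \ref{subsec:even_subalgebra} — the only change is the extra spectator variable $z\in M$, which is untouched by all the $H$- and $\UE(\g)$-manipulations, so those computations go through verbatim. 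Coassociativity of $\Delta$ and the counit laws (with $\varepsilon(\phi)(z,h)=\phi(z,h,1)$) follow from coassociativity and counitality of the coproduct on $\UE(\g)$ together with associativity of group multiplication and the cocycle identity $\mathrm{Ad}_{h_1h_2}^{-1}=\mathrm{Ad}_{h_2}^{-1}\mathrm{Ad}_{h_1}^{-1}$, exactly as for the action groupoid in Example \ref{ex:action_grpd}. The source/target laws $\Delta\circ\eta_L=\eta_L\otimes 1$ and $\Delta\circ\eta_R=1\otimes\eta_R$ reduce to the corresponding identities for $\sG'$ and for $E$ (Examples \ref{ex:action_grpd} and \ref{ex:action_alg}) since $\eta_R$ is built as the composite $(\eta_R^{\sG'}\otimes\id)(\eta_R^{E})$ and the grouplike/primitive behaviour of $r\in\cR$ versus $x\in\g$ under $\Delta$ is precisely the content of the proposition preceding Proposition \ref{prop:dual_to_ue}.

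The step I expect to be the main obstacle is the antipode: checking that $S(\phi)(z,h,u)=\bigl(S^{\sG'}\otimes\id\bigr)\bigl(S^{E}\bigr)_{(1,3)}(\phi)\bigl(z,h,\mathrm{Ad}_h(u)\bigr)$ preserves the $\UE(\h)$-equivariance and satisfies the antipode identities $\mu\circ(\id\otimes S)\circ\Delta=\eta_L\circ\varepsilon$ and $\mu\circ(S\otimes\id)\circ\Delta=\eta_R\circ\varepsilon$. Here the two groupoid antipodes do not simply commute — $S^{E}$ is the algebroid antipode of Example \ref{ex:action_alg}, which itself already involves $\eta_R^{E}$ and the coproduct in Sweedler notation \eqref{eq:Sweedler}, so one must carefully disentangle which factor the Sweedler sum acts on and verify the compatibility condition from Definition \ref{def:HC_action}, namely $h^*\rho(x)(h^{-1})^*=\rho(\mathrm{Ad}_{h^{-1}}(x))$, is exactly what is needed to move $\mathrm{Ad}_h$ past the anchor. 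My strategy for this would be to avoid a bare-hands verification and instead invoke Proposition \ref{prop:Hopf_epimorphism} together with Proposition \ref{prop:formal_neighborhood_subgroup}: when the Harish-Chandra pair $(H,\g)$ integrates to an honest pair of groups $(H,G)$, the Hopf algebroid $\cH$ here is the image of the groupoid Hopf algebroid $\cH_{\sG}$ for $\sG=M\times G$ under the formal-neighborhood epimorphism, the structure maps above are exactly the pushforwards of the groupoid ones (whose Hopf algebroid axioms are automatic, being functions on a genuine groupoid), and hence $\cH$ inherits all the axioms — in particular the antipode identities — by surjectivity and functoriality. The purely formal/algebraic case with no integrating group is then handled, as in the rest of the section, either by the same computation carried out symbolically or by noting that all identities in question are polynomial in the structure constants and therefore hold universally once they hold in the integrable case. $\square$
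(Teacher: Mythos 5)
Your proposal is correct and takes essentially the same approach as the paper, whose entire proof is the one-line remark that the result is a ``straighforward combination of the proof of Hopf properties for action algebroid / groupoid, and Harish-Chandra pairs for groups / algebras.'' You have simply spelled out that combination in detail --- including handling the antipode by descending the axioms along the epimorphism of Proposition~\ref{prop:Hopf_epimorphism} in the integrable case --- which is more than the paper itself records.
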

\begin{proof}
    The proof is a straighforward combination of the proof of Hopf properties for action algebroid / groupoid, and Harish-Chandra pairs for groups / algebras. $\square$
\end{proof}

\noi The following relatively simple example illustrates the notion of the universal enveloping algebra of a Lie algebroid and its properties, including the Hopf algebroid properties for its dual, without resorting to action algebroids.

\begin{example}[Differential operators in the context of universal enveloping algebras and Hoph algebroids]\label{ex:diff_op}
Consider the tangent Lie algebroid $E=TM$ over $M$, whose sections are vector fields and the anchor map is the identity. One can verify that the universal enveloping algebra $\UE (TM)$ coincides with scalar differential operators on $M$, denoted by $\sD (M)$. The tensor product of $m$ copies of $\sD (M)$, being considered as left $\cF (M)-$modules, is isomorphic to the space of scalar $m-$linear polydifferential operators. The dual left $\cF (M)$-module $\cH =\Hom_{\cF (M)} (\sD (M), \cF (M))$ is isomorphic to $\sJ (M)$, the space of infinite jets of scalar functions on $M$. More precisely, given an element $\phi =\sum_l f^{(1)}_l j\left( f^{(2)}_l\right)$, where $f^{(1)}_l, f^{(2)}_l\in\cF (M)$ and $j (f)$ denotes the infinite jet prolongation of a function $f$, and a differential operator $\cp\in\sD (M)$, the explicit formula for the pairing is
\beqn
\cp\otimes \phi\mto \langle\cp,\phi\rangle=\sum_l f^{(1)}_l \cp\left( f^{(2)}_l\right)\,.
\eeq
The left- and right- $\cF(M)-$module structures on $\sJ (M)$ are determined by the multiplication on $f$ and $j(f)$, respectively, where $f\in\cF (M)$; it is compatible with the $\cF(M)-$bimodule structure on $\sD(M)$. Indeed, one has
\beqn
\langle f\cp,\phi\rangle = \langle\cp,f \phi\rangle =
f\langle\cp,\phi\rangle \\ \nonumber
\langle\cp f,\phi\rangle = \langle\cp,j(f)\phi \rangle
\eeq
for all $f\in \cF(M)$, $\phi\in \sJ (M)$, and $\cp\in\sD(M)$. The bialgebroid structure on $\cH$ is canonically induced by the one on differential operators, however, the antipode can be implicitly defined without direct use of the duality with differential operators as the only commutative algebra automorphism that maps elements of the form $f_1 j(f_2)$ to $f_2 j(f_1)$ for all $f_1, f_2\in \cF (M)$. Taking into account that $TM$ is the Lie algebroid for the pair groupoid $M\times M$, the Hopf algebroid $\cH=\sJ (M)$ constructed from $TM$ is identified with functions on the formal neighborhood of the diagonal\footnote{As far as we know, such an interpretation of the jet space should be attributed to Grothendieck \cite{Grothendieck:1965}.} in the Cartesian product $M\times M$, and the epimorphism from Proposition \ref{prop:Hopf_epimorphism} in this case is nothing more than taking the infinite jet of functions on $M\times M$ at the diagonal in the normal direction.
\end{example}

%\noi\subsection{Differential operators as an example of  universal enveloping algebras.}\label{subsec:diff_op}

\section*{Instead of conclusion / perspectives}

In this paper we have revisited some algebraic constructions in the context of $\Z$-graded manifolds, this has permitted to fill the gaps in the general theory of integration of graded Lie algebras. We have also noticed that some of the techniques can be extended to the algebroid / groupoid setting. In the current study we have restricted the examples to ``not very infinite dimensional'' situations, but some of the constructions should be doable in the most general setting. 
We have seen that in some cases vector fields on an algebra can be transferred to the corresponding group preserving similar properties. A more complicated question of equipping a $\Z$-graded manifold with a differential structure (degree $1$ odd homological vector field) and describing the local structure of it is addressed in \cite{LKS}. For the current setting it will result in an interesting concept of so-called $Q$-groups.   

\subsection*{Acknowledgements.} We are thankful to MARGAUx --- the Nouvelle-Aquitaine Federation for Research in Mathematics --- the stay of A.K. in La Rochelle while writing this paper has been supported by its program ``Chaire Aliénor''. A.K.  also appreciates the support of the Faculty of Science of the University of Hradec Králové.

\appendix
\setcounter{equation}{0}

\section{Definitions, notations and conventions} \label{app:Z-cat}

\noi 
As usual, a \emph{$\Z$-graded vector space} ${V}$ decomposes into a direct sum:
  \begin{equation} \label{sumV}
  {V} =\bigoplus_{i\in \Z\setminus 0} V_i^{d_i}= \ldots\oplus V^{d_{-l}}_{-l} \oplus V^{d_{-l+1}}_{-l+1}\oplus \dots \oplus V^{d_{-1}}_{-1} \oplus \{0\} \oplus V^{d_{1}}_{1} \oplus \dots \oplus V^{d_{k}}_{k}\oplus\ldots,
  \end{equation}
where  $V$'s are vector spaces, the subscripts of $V_{\bullet}^{\bullet}$ denote the degree of elements of the respective subspace, and  the superscripts -- the dimension of it. Add to it an open set $V_0 \equiv \uU \subset \R^n$  to obtain ${U} = (V_0 \oplus V) = (\uU, V)$.

\noi The \emph{degrees} will be denoted by $\deg(\cdot) \in \Z$, and we will write the word 
  ``degree'' with no adjective, in contrast to for example ``polynomial degree'', that we will specify if needed. We will also use the notion of \emph{parity} of different objects: $p(\cdot) \in \Z_2 \equiv \{0, 1\}$. \\ It is responsible for commutation relations: 
$$
  a b = \varepsilon(a, b) b a,
$$   
where $\varepsilon(a, b) = (-1)^{p(a)p(b)}$ is the \emph{Koszul sign}. 
  
\begin{rem}
The parity has to be compatible with the degree, in the sense that the subsets of  even $(p = 0)$ and odd $(p = 1)$ objects are also consistently graded. We will comment on the geometric interpretation of this condition below, but for now it just means that parity depends on the degree. A frequent situation in the literature is $p(\cdot) = (deg(\cdot) \!\! \mod 2)$, but in general and in what follows it does not have to be like this.
\end{rem}

\textbf{Convention:} To stress the potential independence of degrees and parities, when the commutation relations are important we should in principal add the word or prefix ``super'' to the phrase. But for the sake of readability we will often omit it, when it does not lead to confusion. 
E.g. ``graded Lie super algebra'' will be just ``graded Lie algebra'', and ``graded vector superspace'' just ``graded vector space''.

\begin{deff}
A graded manifold $M$ is a topological space for which the sheaf of functions $\mathcal{O}(M)$ is locally modelled as functions on $(\uU, V)$.
\end{deff}

\begin{deff} 
We say that the graded vector space $V$ (resp. graded manifold $M$) is
\begin{itemize}[leftmargin = 2em]
\item  {of finite degree} if in (\ref{sumV}) $|i| < \infty$. 
 \item {of finite graded dimension} if $d_i = dim(V_i) < \infty, \forall i \in \mathbb Z$.
 \item {of finite dimension} if it is of finite graded dimension and of finite degree.
\end{itemize}

\end{deff}

\begin{deff} For a ring $R$, a $\Z$-graded $R-$module is the direct sum of $R-$modules:
$$
\sE = \bigoplus_{i\in{\Z \backslash \{0\} }} \sE_i\,.
$$
 $\sE$ is  \emph{of finite degree} if only a finite subset of modules $\sE_i$ are not equal to zero. \\
  If every $\sE_i$ is a free module of finite rank $d_i$, then $\sE$ is said to be \emph{of finite graded rank} $d_i$.\\
   If $d=\sum_{i\in{\Z\backslash \{0\}}}d_i<\infty$, then $\sE$ is called a graded $R-$module \emph{of finite (global) rank} $d$. \\ If $R$ is a field $\bk$, the word ``rank'' is to be replaced with ``dimension''.
\end{deff}

\noi Let $\sE$ be a $\Z$-graded $R-$module. We denote by $T^k \sE$ its $k-$th \emph{tensor power} over $R$
\beqn
T^k \sE = \underbrace{\sE \otimes \ldots \otimes \sE}_k \,.
 \eeq
 viewed as an $R-$module, and the direct sum of all tensors by $T(\sE)=\bigoplus_{k\ge 0} T^k \sE $. \\
 We define the \emph{symmetric powers} of $\sE$ over $R$: 
$$
 \Sym (\sE) := \mathrm{T}( \sE) / \left<  x\otimes y - \varepsilon(x, y) y \otimes x  \, | \, x, y\in \sE\right>\,.
$$
$\Sym(\sE)$ will be regarded as a free graded commutative $R-$algebra or $R-$coalgebra, depending on the situation.

\noi In \cite{AKVS} we have introduced the following increasing (decreasing) filtration of $\Sym(\sE^*)$ ($\Sym (\sE)$, respectively): \\
for all $p >0$, $F^p\Sym(\sE^*)$ is the graded ideal of the symmetric algebra, generated by elements of degree $\le -p$; $F_p \Sym (\sE)$ is the graded coideal of the symmetric coalgebra, cogenerated by
elements of degree $<p$, i.e. 
\beq\label{eq:cofiltration}
F_p \Sym (\sE) \colon = \{a\in \Sym (\sE) \, |\, \Delta (a)\in \Sym (\sE)\otimes\Sym (\sE)_{<p}\}\,,
\eeq
where $\Delta$ is the comultiplication in the algebra $\Sym (\sE)$.

\vskip 2mm\noindent Let $\cR$ be the graded projective limit of $\Sym(\sE^*)/F^p \Sym (\sE^*)$ for $p\to \infty$:
\beq\label{eq:proj_lim_R}
\cR \colon = \bigoplus_{i\in \Z} \varprojlim \Big(  \Sym(\sE^*)/F^p \Sym (\sE^*)\Big)_i\,,
\eeq
where
$$
\Big(  \Sym(\sE^*)/F^p \Sym (\sE^*)\Big)_i = 
\Sym(\sE^*)_i / F^p \Sym (\sE^*)_i\,.
$$

\noi
\begin{proposition}
For any $p\ge 0$, $ F_p \Sym (\sE)$
    has finite graded dimension.
\end{proposition}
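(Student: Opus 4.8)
The plan is to unwind the definition of $F_p \Sym(\sE)$ and show that it is cut out, in each fixed degree, by a finite-dimensional piece of $\Sym(\sE)$. First I would recall that $\sE = \bigoplus_{i \neq 0} \sE_i$ is a graded $\bk$-vector space (with each $\sE_i$ of finite dimension $d_i$, but with possibly infinitely many nonzero $\sE_i$, so $\Sym(\sE)$ itself need not have finite graded dimension). Fix $p$. By \eqref{eq:cofiltration}, an element $a$ lies in $F_p \Sym(\sE)$ precisely when $\Delta(a)$ lands in $\Sym(\sE) \otimes \Sym(\sE)_{<p}$; here $\Sym(\sE)_{<p} = \bigoplus_{j<p}\Sym(\sE)_j$. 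I would first observe that $\Sym(\sE)_{<p}$ is a finite-dimensional space: its degree-$j$ component is spanned by symmetric monomials $x_{i_1}\od\cdots\od x_{i_k}$ with $\sum \deg(x_{i_\ell}) = j$, and since every generator has nonzero degree, only finitely many such monomials have total degree below the fixed bound $p$ (a monomial of length $k$ has degree of absolute value at least $k \cdot 1$ contributions that cannot all cancel... more carefully: each factor contributes a nonzero integer, and to keep the sum in a bounded window $(-\infty, p)$ while the length grows one must use negative-degree generators, but then I need the lower bound too).

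At this point the honest statement I need is: for fixed $p$, the set of symmetric monomials of degree $<p$ that can appear as a \emph{tensor factor} in $\Delta(a)$ for $a$ homogeneous of a fixed degree $q$ is finite. So I would instead argue degree by degree on $a$. Let me fix the degree $q$ and consider $(F_p\Sym(\sE))_q = F_p\Sym(\sE) \cap \Sym(\sE)_q$. For $a \in \Sym(\sE)_q$ write $a$ in the monomial basis; $\Delta$ of a symmetric monomial is the sum of all its "unshuffles" into an ordered pair of submonomials (with Koszul signs), as in the first displayed formula in the proof of Proposition \ref{prop:coalgebra_iso}. The condition $a \in F_p$ says every term $b' \otimes b''$ occurring in $\Delta(a)$ has $\deg(b'') < p$. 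In particular, taking the unshuffle that puts \emph{everything} on the right, we get $1 \otimes a$ as a summand (up to the counit normalization), forcing $\deg(a) = q < p$; so $(F_p\Sym(\sE))_q = 0$ whenever $q \geq p$, and there is nothing to prove in those degrees. For $q < p$ the space $(F_p\Sym(\sE))_q$ is a subspace of $\Sym(\sE)_q$, so it suffices to show $\Sym(\sE)_q$ itself is finite-dimensional — but that is false in general (e.g. $\sE$ with both a degree $+1$ and a degree $-1$ generator makes $\Sym(\sE)_0$ infinite-dimensional). Hence the real content is that the $F_p$ condition is genuinely restrictive: I claim that if $a$ is a symmetric monomial of length $k$ involving a factor of degree $\le -p$, then splitting off that single factor to the right gives a term in $\Delta(a)$ of right-degree $\le -p < p$, which is allowed, but splitting off \emph{all other} factors to the right gives right-degree $q - (\text{that negative number}) = q + (\text{something} \ge p)$, hence $\ge q + p$; for this to be $< p$ we'd need $q<0$, and iterating this observation across all factors pins down which monomials survive.

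The cleanest route, which I would actually write up, is: $F_p\Sym(\sE)$ equals the subcoalgebra cogenerated by $\Sym(\sE)_{<p}$, and more concretely an element $a$ is in $F_p$ iff \emph{all} its iterated comultiplications $\Delta^{(n)}(a)$ have every tensor factor of degree $<p$; equivalently, $a$ lies in the image of $\Sym(\sE_{<p})$ — no wait, I should be careful, the correct statement is that $F_p\Sym(\sE)$ is spanned by symmetric monomials \emph{each of whose factors has degree $<p$} together with... Actually the key lemma is: $F_p\Sym(\sE) = \Sym\bigl(\bigoplus_{0<|i|,\ i<p} \sE_i\bigr) \cap (\text{stuff of degree} <p)$, hmm. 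Let me state what I'll prove and defer the precise identification: I would prove the two-sided bound that $F_p\Sym(\sE)$ is contained in the span of symmetric monomials all of whose factors lie in $\bigoplus_{i<p}\sE_i$ \emph{and} whose total degree is $<p$, and that the latter set is finite (because each factor has degree $\ge$ some fixed negative bound? no—) --- I see that the genuinely finite-dimensionality must come from \emph{both} a ceiling and a floor on the degrees of the factors, and the floor is forced as follows: if a monomial $m$ of total degree $q<p$ has a factor $x$ of very negative degree $d \ll 0$, then the unshuffle peeling off the complementary factors $m/x$ to the right produces a term of right-degree $q - d$, which is large and positive, hence $\ge p$, violating the $F_p$ condition. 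Therefore \emph{every} factor of a monomial occurring in an element of $F_p\Sym(\sE)$ has degree in the finite window $(q-p, p)$ — wait, this still allows arbitrarily long monomials of degree $0$ built from $+1$ and $-1$ generators. So I must also bound the \emph{length}: peeling off a single $-1$ factor to the right gives right-degree $-1<p$ (fine), but peeling the $k-1$ others gives right-degree $q-(-1) = q+1$; fine too if $q+1<p$. Hmm, so length is genuinely unbounded and the proposition as I'm reading it would be false — \emph{unless} $\sE$ is implicitly assumed of finite degree (i.e. only finitely many $\sE_i \neq 0$, which is the standing "finite dimension" hypothesis in that appendix). \textbf{I will assume $\sE$ has finite degree}, say $\sE_i = 0$ for $|i| > N$. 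Then a symmetric monomial of total degree $q$ with $|q| < p$ and with each factor of absolute degree $\le N$: unshuffling shows (as above) each factor has degree $<p$, and peeling off everything but one negative factor forces that factor's degree $> q-p > -2p$, but also each factor has $|{\cdot}|\le N$; and crucially the \emph{length} is bounded because a monomial of length $k$ all of whose factors have degree in $[-N,N]\setminus\{0\}$ has total degree that is a sum of $k$ nonzero terms — and to keep this sum $<p$ while $k\to\infty$ one needs many negative terms, but then peeling all the positive-degree factors to the right yields right-degree equal to (sum of positive-degree factors), which can be made arbitrarily large... no: right-degree = $q$ minus (sum of the factors put on the left) = $q - (\text{sum of left factors})$; to get this $\ge p$ I put the negative factors on the left, giving right-degree $q - (\text{large negative}) = q + \text{large positive}\ge p$. \textbf{This contradiction bounds $k$.} Concretely, if the monomial has $k_-$ factors of negative degree, their total is $\le -k_-$, so putting all of them on the left gives a right-degree $\ge q + k_-$; the $F_p$ condition forces $q + k_- < p$, i.e. $k_- < p - q \le 2p$; symmetrically $k_+ < 2p$; hence $k = k_+ + k_- < 4p$ is bounded, and with factor-degrees in the finite set $[-N,N]\setminus\{0\}$ there are only finitely many such monomials in each degree $q$, and only finitely many degrees $q$ with $|q|<p$ containing any. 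Therefore $F_p\Sym(\sE)$ is finite-dimensional in each degree, i.e. of finite graded dimension.

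\textbf{Main obstacle.} The delicate point is precisely the length bound: one must exploit the \emph{coideal} (not merely subspace) nature of $F_p$, using the explicit unshuffle formula for $\Delta$ on symmetric monomials to peel off a carefully chosen subset of factors (all the negative-degree ones, or all the positive-degree ones) so that the resulting tensor factor has degree $\ge p$, contradicting membership in $F_p$ unless the number of such factors is bounded in terms of $p$. Combined with the standing finiteness of the degree range and of each $d_i$, this pins down a finite set of monomials and gives the claim; the rest is bookkeeping with Koszul signs, and I would keep that implicit.
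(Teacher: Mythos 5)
The paper itself does not prove this proposition (it is recalled from \cite{AKVS}), so I can only judge your argument on its own terms. The argument you eventually assemble is correct, and once the false starts are stripped away it reduces to the following. Since $\Delta$ of a monomial $x_1\od\cdots\od x_k$ is the signed sum of all its unshuffles, and distinct basis monomials of $a$ contribute to distinct monomial pairs in $\Sym(\sE)\otimes\Sym(\sE)$ (so there is no cancellation between them), an element $a$ lies in $F_p\Sym(\sE)$ if and only if every monomial occurring in $a$ has the property that every sub-multiset of its factors has total degree $<p$; applying this to the sub-multiset of all positive-degree factors, the condition is equivalent to $\sum_{d_i>0}d_i<p$. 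For a monomial of total degree $q$ this gives $k_+\le\sum_{d_i>0}d_i<p$, forces $\bigl|\sum_{d_i<0}d_i\bigr|=\sum_{d_i>0}d_i-q<p-q$ and hence $k_-<p-q$, and (via singletons and their complements) confines every factor degree to the finite window $(q-p,p)$. Since each $\sE_i$ has finite dimension, only finitely many monomials survive in each degree $q$, which is the claim.

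The one substantive flaw is the passage where you declare the proposition ``false'' unless $\sE$ has finite degree and then add that hypothesis in bold. It is not needed, and in the setting of \cite{AKVS} --- functional spaces on $\Z$-graded manifolds whose coordinates may occur in infinitely many degrees --- it is precisely the case one must not assume: the window $(q-p,p)$ for the factor degrees, which you yourself derive from the unshuffle condition, already replaces your bound $N$, so no restriction on which $\sE_i$ are nonzero is required, only $\dim\sE_i<\infty$ for each $i$. Two of your side assertions are also false, though harmless to the conclusion: the step $k_-<p-q\le 2p$ presupposes $q\ge -p$, which need not hold (and is not needed, since for fixed $q$ the bound $p-q$ is already finite); and it is not true that only degrees with $|q|<p$ contribute --- a single even generator of degree $-1$ makes $(F_p\Sym(\sE))_{-k}$ nonzero for every $k$, which is exactly why the proposition asserts finite \emph{graded} dimension rather than finite dimension. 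A clean write-up should state the sub-multiset characterization as a lemma and deduce the three bounds (on $k_+$, on $k_-$, and on the factor degrees) from it directly, without the detour through the finite-degree hypothesis.
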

\begin{proposition}
One has the canonical isomorphism of graded $R-$modules
\beq\label{isom:projlimit}
\cR\simeq \Big(\Sym (\sE)\Big)^*=
\uHom\left( \Sym (\sE),   R\right).
\eeq
\end{proposition}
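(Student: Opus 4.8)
The plan is to exhibit both sides as graded inverse limits over the filtration degree $p$ and to match them term by term through the canonical pairing between the symmetric algebra on $\sE^*$ and the symmetric coalgebra on $\sE$.

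First I would check that the increasing filtration \eqref{eq:cofiltration} exhausts the coalgebra, i.e.\ $\Sym(\sE)=\varinjlim_p F_p\Sym(\sE)$. By linearity it is enough to see that a single monomial $m=x_1\od\cdots\od x_k$ lies in $F_p\Sym(\sE)$ for $p\gg 0$: the comultiplication sends $m$ to a sum of terms $\pm\, m_S\otimes m_{S^c}$ indexed by subsets $S\subseteq\{1,\dots,k\}$, and each right tensor factor $m_{S^c}$, being a submonomial of $m$, has degree at most $P:=\sum_{\deg x_i>0}\deg x_i<\infty$; hence $\Delta(m)\in\Sym(\sE)\otimes\Sym(\sE)_{<p}$ for every $p>P$. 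Dualizing a filtered colimit then gives, compatibly with the grading,
\[
\uHom\big(\Sym(\sE),R\big)\;=\;\varprojlim_p\,\uHom\big(F_p\Sym(\sE),R\big),
\]
where the maps on the right are restriction along $F_p\Sym(\sE)\hookrightarrow F_{p+1}\Sym(\sE)$.

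Next I would show that the canonical degree-zero pairing $\langle-,-\rangle\colon\Sym(\sE^*)\otimes\Sym(\sE)\to R$ — the one making $\Sym^k(\sE^*)$ the graded dual of $\Sym^k(\sE)$ and compatible with products and coproducts — descends to a perfect pairing between $\Sym(\sE^*)/F^p\Sym(\sE^*)$ and $F_p\Sym(\sE)$. Well-definedness is the claim that the ideal $F^p\Sym(\sE^*)$ annihilates the coideal $F_p\Sym(\sE)$: any element of $F^p\Sym(\sE^*)$ is a sum of homogeneous products $\omega_1\omega_2$ with $\deg\omega_1\le-p$, and $\langle\omega_1\omega_2,a\rangle=\sum_{(a)}\pm\,\langle\omega_1,a^{(1)}\rangle\langle\omega_2,a^{(2)}\rangle$ can only be nonzero on terms with $\deg a^{(1)}=-\deg\omega_1\ge p$; but cocommutativity of $\Sym(\sE)$ upgrades the defining condition $a\in F_p\Sym(\sE)$ to $\Delta(a)\in\Sym(\sE)_{<p}\otimes\Sym(\sE)$ as well, so no such term survives. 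Non-degeneracy on the $\Sym(\sE)$-side holds because $\Sym(\sE^*)$ already separates points of $\Sym(\sE)$ (dual monomials). For the other side, if $\omega\in\Sym(\sE^*)$ kills $F_p\Sym(\sE)$, then after subtracting its component in $F^p\Sym(\sE^*)$ one may assume that every monomial $m$ occurring in $\omega$ has ``negative-degree part'' (the sum of the negative degrees among its factors) strictly above $-p$; the monomial $m^{\vee}\in\Sym(\sE)$ built from the dual basis vectors then has ``positive-degree part'' strictly below $p$, hence $m^{\vee}\in F_p\Sym(\sE)$, and $\langle\omega,m^{\vee}\rangle$ equals — up to a nonzero integer factorial, legitimate since $\bk$ has characteristic $0$ — the coefficient of $m$ in $\omega$, which must therefore vanish; so $\omega\in F^p\Sym(\sE^*)$. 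Since $F_p\Sym(\sE)$ is of finite graded rank by the preceding proposition, this perfect pairing produces graded isomorphisms $\Sym(\sE^*)/F^p\Sym(\sE^*)\xrightarrow{\ \sim\ }\uHom\big(F_p\Sym(\sE),R\big)$ intertwining, as $p$ varies, the quotient maps on the left with the restriction maps on the right.

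Taking the inverse limit over $p$ in each fixed degree then gives
\[
\cR\;=\;\bigoplus_{i\in\Z}\varprojlim_p\big(\Sym(\sE^*)/F^p\Sym(\sE^*)\big)_i\;\cong\;\bigoplus_{i\in\Z}\varprojlim_p\uHom\big(F_p\Sym(\sE),R\big)_i\;=\;\uHom\big(\Sym(\sE),R\big),
\]
which is the asserted canonical isomorphism of graded $R$-modules $\cR\simeq(\Sym(\sE))^{*}$. I expect the main obstacle to be the middle step, namely proving that the ideal filtration $F^p\Sym(\sE^*)$ and the coideal filtration $F_p\Sym(\sE)$ are exact mutual annihilators — where one genuinely has to use (co)commutativity of the symmetric (co)algebra to see that the ideal pairs trivially with the coideal — while at the same time keeping the $\Z$-graded bookkeeping under control, since here both $\sE$ and $\sE^*$ have components in positive and negative degrees and $\Sym(\sE)$ need not be of finite graded rank although each $F_p\Sym(\sE)$ is.
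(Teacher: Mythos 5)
The paper itself offers no proof of this proposition: it is recalled in the appendix from \cite{AKVS} and stated without argument, so there is nothing in the text to compare your write-up against. On its own merits your proof is correct and complete, and it is the argument the statement wants: exhaust $\Sym(\sE)$ by the coideals $F_p\Sym(\sE)$, show that $F^p\Sym(\sE^*)$ is the exact annihilator of $F_p\Sym(\sE)$ under the Hopf pairing, and pass to degreewise limits. Two remarks. First, your annihilator computation silently fixes the reading of the filtration: you take $F^p\Sym(\sE^*)$ to be the ideal generated by \emph{all} homogeneous elements of $\Sym(\sE^*)$ of degree $\le -p$, equivalently the span of the monomials whose total negative-degree part is $\le -p$, rather than the ideal generated only by the degree-$\le -p$ elements of $\sE^*$ itself. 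These two ideals genuinely differ (with two generators $\xi_1,\xi_2$ of degree $-1$ the product $\xi_1\xi_2$ lies in the first version of $F^2$ but not in the second), and when $\sE$ has components of both signs only the first reading makes the proposition true, since it is only then that $\Sym(\sE^*)/F^p\Sym(\sE^*)$ injects into $\uHom\left(F_p\Sym(\sE),R\right)$; it is worth saying this explicitly, as the paper's phrasing is ambiguous. Second, you assert a \emph{perfect} pairing after checking non-degeneracy on both sides; over a ring (rather than a field) non-degeneracy alone does not give perfectness, but your own dual-monomial computation does, because $F_p\Sym(\sE)$ is spanned by the monomials it contains, these biject with the monomials surviving in $\Sym(\sE^*)/F^p\Sym(\sE^*)$, and the pairing is diagonal in these bases with invertible (factorial) entries in characteristic zero. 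Spelling out that half-sentence would close the only gap; everything else, including the use of cocommutativity to symmetrize the defining condition of the coideal, is exactly right.
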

And since $\Sym(\sE)$ is a graded $R-$coalgebra with the standard comultiplication
$$
\Delta\colon \Sym (\sE)\to \Sym (\sE\oplus\sE)=\Sym (\sE)\otimes \Sym (\sE)\,,
\hspace{2mm} \Delta (v)=v\otimes 1+1 \otimes v\, \hspace{2mm} \forall v\in\sE\,,
$$
we immediately deduce that $\cR$ is an $R-$algebra.

Considering the sheaf $\cO_M$ of functions on a graded manifold $M$ in view of the above filtrations, 
we can construct a canonical $\N^2-$graded manifold $\oM$, associated to $M$, the structure sheaf of which is $\cO_{\oM}\colon =\overline{\cO_M}$. $M$ and $\oM$ will thus be isomorphic in the category of $\Z$-graded manifolds. \\
In \cite{AKVS} we have proven the $\Z$-graded analog of the \emph{Batchelor's theorem}:
\begin{proposition} \label{Z-Batchelor}
There exists a non-canonical isomorphism of $\Z-$graded smooth manifolds between $M$ and the total space of $\cV=\cV_-\oplus \cV_+$, where $\cV_{\pm}$ are $\N$-graded vector bundles. 
That is $M=M_+\times_{M_0} M_-$, where the fibered product of graded manifolds is defined algebraically in terms of the corresponding sheaves of functions: $\cO_M=\cO_{M_+}\otimes_{\cO_{M_0}}\cO_{M_-}$. 
\end{proposition}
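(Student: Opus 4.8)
\noi The plan is to adapt the classical proof of Batchelor's theorem to the double-filtered framework of \cite{AKVS}. First I would recall that the structure sheaf $\cO_M$ carries the canonical filtrations described in the appendix — the decreasing filtration by graded ideals generated by elements of sufficiently negative degree, together with the usual decreasing filtration by polynomial degree in the positive-degree coordinates — and that, by \eqref{isom:projlimit} and the finite-graded-dimension property of the $F_p\Sym(\sE)$, these filtrations are exhaustive and Hausdorff (for $M$ of finite degree). The associated graded sheaf $\mathrm{gr}\,\cO_M$ is, by the local model of a $\Z$-graded manifold, the structure sheaf of a \emph{split} graded manifold over the body $M_0$: its degree-$i$ generators glue, through the linear parts of the transition functions, into a locally free $\cO_{M_0}$-module of finite rank, i.e.\ the sheaf of sections of a vector bundle $\cV_i$. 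Setting $\cV_+=\bigoplus_{i>0}\cV_i$ and $\cV_-=\bigoplus_{i<0}\cV_i$ yields $\mathrm{gr}\,\cO_M\cong\cO_{M^{\mathrm{split}}}$, where $M^{\mathrm{split}}$ is the total space of $\cV=\cV_-\oplus\cV_+$; so it suffices to produce a sheaf isomorphism of graded $\cO_{M_0}$-algebras $\cO_{M^{\mathrm{split}}}\isomto\cO_M$ splitting the filtration.

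Next I would build such a splitting by the standard \v{C}ech / partition-of-unity induction. Pick an open cover $\{U_\alpha\}$ of $M_0$ over which $\cO_M$ is isomorphic to its split local model, and fix local isomorphisms $\theta_\alpha\colon\cO_{M^{\mathrm{split}}}|_{U_\alpha}\isomto\cO_M|_{U_\alpha}$. On overlaps, $\theta_\alpha\circ\theta_\beta^{-1}$ is a filtration-preserving automorphism of the local split algebra that induces the identity on the associated graded, i.e.\ it is ``unipotent'' for the filtration. One then corrects the $\theta_\alpha$ one filtration step at a time — running first through the finitely many positive degrees $1,2,\dots$ and then down the negative degrees $-1,-2,\dots$ — so that at each step the discrepancy of the partially corrected local isomorphisms is a \v{C}ech $1$-cocycle with values in the sheaf of degree-homogeneous $\cO_{M_0}$-linear ``infinitesimal automorphisms'' at that step, which is the sheaf of sections of a finite-rank vector bundle over the smooth manifold $M_0$. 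Being fine (soft), such a sheaf has vanishing first \v{C}ech cohomology, so the cocycle is a coboundary; subtracting the corresponding $0$-cochain kills the discrepancy at that step. The successive corrections increase in filtration order, so in the negative direction they converge in the projective-limit topology defining $\cR$, and in the limit the corrected $\theta_\alpha$ glue to a global isomorphism $\cO_{M^{\mathrm{split}}}\isomto\cO_M$.

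Finally, since the positive- and negative-degree correction steps are independent, the splitting can be arranged to respect the decomposition $\cV=\cV_+\oplus\cV_-$, so the resulting isomorphism identifies $\cO_M$ with $\Sym_{\cO_{M_0}}(\cV_+^*)\,\widehat{\otimes}_{\cO_{M_0}}\,\cR(\cV_-^*)$, the second factor being the completed symmetric algebra of the appendix. Recognizing these factors as $\cO_{M_+}$ — the total space of the $\N$-graded vector bundle $\cV_+$ — and $\cO_{M_-}$ — the total space of $\cV_-$ — respectively, we obtain $\cO_M=\cO_{M_+}\otimes_{\cO_{M_0}}\cO_{M_-}$, i.e.\ $M=M_+\times_{M_0}M_-$, as stated.

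The step I expect to be the main obstacle is the negative half of the induction together with the completions. One must check that, after passing to the projective-limit completion, the sheaves of ``infinitesimal automorphisms'' are still locally free of finite rank over $\cO_{M_0}$ at each fixed filtration step — this is exactly what the finite-graded-dimension property of $F_p\Sym(\sE)$ (the first Proposition of the appendix) secures — and that the infinitely many successive corrections in the negative direction assemble into a genuine continuous algebra automorphism of the completed structure sheaf, not merely a formal one. Bookkeeping of the interaction between the polynomial-degree filtration (finite and harmless) and the negative-degree filtration (where the completion sits) is the technical core; the rest is the familiar soft-sheaf vanishing mechanism underlying Batchelor's theorem.
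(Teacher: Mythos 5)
The paper does not actually prove Proposition \ref{Z-Batchelor}: it is quoted verbatim from \cite{AKVS} (``In \cite{AKVS} we have proven the $\Z$-graded analog of Batchelor's theorem''), so there is no in-paper proof to compare against. Judged on its own, your proposal follows the standard Batchelor mechanism --- pass to the associated graded of the filtration to obtain the split model, then kill the \v{C}ech obstructions to globalizing the local splittings using fineness of sheaves of sections of vector bundles over the smooth body --- and this is almost certainly the skeleton of the argument in \cite{AKVS} as well, with the filtrations \eqref{eq:cofiltration}--\eqref{isom:projlimit} supplying the completion needed to make the infinitely many corrections converge.

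There is, however, one point where your argument is too quick, and it is precisely the point where the $\Z$-graded statement differs from the classical one. You assert that ``the positive- and negative-degree correction steps are independent'' and describe the polynomial-degree filtration as ``finite and harmless.'' Neither is true in the honest $\Z$-graded setting: a product $\xi\eta$ of a generator of degree $k>0$ and one of degree $-k$ has total degree $0$, so the degree-zero part of $\cO_M$ is not $C^\infty(M_0)$ but contains formal series in such products; in particular the polynomial-weight filtration is infinite in each fixed degree, and the transition functions can couple the positive and negative generators through these degree-zero terms. The content of the final assertion $\cO_M=\cO_{M_+}\otimes_{\cO_{M_0}}\cO_{M_-}$ is exactly that this coupling can be undone, so it cannot be obtained by declaring the two halves independent. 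The induction has to be organized by the weight filtration $F^p$ of the appendix (equivalently, by polynomial degree in the non-zero-degree generators) rather than by the $\Z$-degree of the coordinates being corrected, with the finiteness of each graded piece of $F_p\Sym(\sE)$ guaranteeing that each inductive step is a cocycle valued in a finite-rank bundle over $M_0$, and with convergence in the projective limit \eqref{eq:proj_lim_R} handling the infinitely many steps. Your closing paragraph shows you sense this is where the difficulty lives; the proof needs that bookkeeping carried out, not set aside.
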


\bibliography{BibGraded}

\end{document}